\newtheorem{theorem}{Theorem}
\newtheorem{corollary}[theorem]{Corollary}
\newtheorem{definition}[theorem]{Definition}
\newtheorem{example}[theorem]{Example}
\newtheorem{lemma}[theorem]{Lemma}
\newtheorem{proposition}[theorem]{Proposition}
\newtheorem{remark}[theorem]{Remark}
\newcommand{\Z}{{\mathbb Z}}
\newcommand{\R}{{\mathbb R}}
\newcommand{\C}{{\mathbb C}}
\newcommand{\N}{{\mathbb N}}
\newcommand{\Q}{{\mathbb Q}}
\newcommand{\Real}{{\mbox{Re}}}
\newcommand{\spann}{{\mbox{span}}}
\newcommand{\ran}{{\mbox{ran}}}
\newcommand{\Imag}{{\mbox{Im}}}
\newcommand{\idty}{{\mathbbm{1}}}
\newcommand{\sgn}{{\mbox{sgn}}}
\numberwithin{equation}{section}
\numberwithin{theorem}{section} 
\numberwithin{footnote}{section}
\begin{document}

\title[Closed extensions]{The closed extensions of a closed operator}

\author[C. Fischbacher]{Christoph Fischbacher$^1$}
\address{$^1$ Department of Mathematics, University of Alabama at Birmingham, Birmingham, AL 35294, USA}
\email{cfischb@uab.edu}
\numberwithin{equation}{section}
\numberwithin{theorem}{section} 
\numberwithin{footnote}{section}
\maketitle
\newcommand{\chr}[1]{{\color{red}{\it Remark by Christoph:} #1 {\it End of Remark.}}}
%
\begin{abstract} Given a densely defined and closed operator $A$ acting on a complex Hilbert space $\mathcal{H}$, we establish a one-to-one correspondence between its closed extensions and subspaces $\mathfrak{M}\subset\mathcal{D}(A^*)$, that are closed with respect to the graph norm of $A^*$ and satisfy certain conditions. In particular, this will allow us to characterize all densely defined and closed restrictions of $A^*$. After this, we will express our results using the language of Gel'fand triples generalizing the well-known results for the selfadjoint case.
 
As applications we construct: (i) a sequence of densely defined operators that converge in the generalized sense to a non-densely defined operator, (ii) a non-closable extension of a symmetric operator and (iii) selfadjoint extensions of Laplacians with a generalized boundary condition.
\end{abstract}

\section{Introduction}
Since the rigorous study of the theory of operators on Hilbert spaces, their extension theory has always played an important role. In particular, the problem of determining the selfadjoint/maximally sectorial/maximally dissipative extensions of a given symmetric/sectorial/dissipative has been the subject of extensive study over the last decades and it would be impossible to give a complete presentation at this point. For an overview over this field, we thus refer the interested reader to the surveys \cite{Arli, AT2009} and all the references therein.

In this paper, we are going to treat the problem of describing all closed extensions of a given closed and densely defined operator $A$. By taking adjoints, this also leads to a complete description of the densely defined and closed restrictions of $A^*$. To the best of our knowledge, this type of problem has not attracted too much attention so far. At this point, we mention G.\ Grubb's results in \cite{Grubb68}, where closed extensions of so called dual pairs are described with the help of closed auxiliary operators and a more recent work by Z.\ Sebesty\' en and J.\ Stochel \cite{SS2009} in which -- among other results -- densely defined restrictions $A'$ of a given closed and densely defined operator $A$ such that $\dim(\mathcal{D}(A)/\mathcal{D}(A'))=1$ are considered.

We believe that the abstract results of this paper will be of interest to the reader as it provides a self-contained and complete treatment of this problem using only very fundamental classical results and generalizes 
previous results on densely defined restrictions of selfadjoint operators that can be considered as folklore.

We will proceed as follows:

In Section \ref{sec:theory}, we will derive our main result (Thm.\ \ref{thm:corresp}) where we show a one-to-one correspondence between all closed extensions of a given closed and densely defined operator $A$ and subspaces $\mathfrak{M}\subset\mathcal{D}(A^*)$ that are closed with respect to the graph norm of $A^*$ and that satisfy Condition \eqref{eq:precloscon}, which we will introduce below. Note that for our results we do not need to make any further assumptions on $A$ like its resolvent being non-empty. By taking adjoints, we will also derive a description of all densely defined and closed restrictions of $A^*$ (Corollary \ref{coro:denserestrictions}).

In Section \ref{sec:gelfand}, we will reformulate our results on densely defined an closed restrictions of a given densely defined and closed operator in the more natural language of Gel'fand triples (Corollary \ref{coro:gelfanddenserestr}). In particular, this will allow us to generalize results that are well-known for the selfadjoint case (e.g.\ \cite{AT2003}). 

After this, we will discuss the suitable notion of convergence between the so constructed closed extensions and densely defined restrictions in Section \ref{sec:convergence}. To this end, we will recall Kato's notion of generalized convergence for operators on Hilbert spaces.
As an application, we construct a sequence of densely defined and closed symmetric operators of which each domain is described by a ``Riemann-sum"-condition that eventually converges in the generalized sense to a non-densely defined closed Hermitian operator whose domain is described by an integral condition (Example \ref{ex:converge}).

In Section \ref{sec:clonoclo}, we use a side-result obtained from the construction in Section \ref{sec:theory} in order to give two examples of extensions of the selfadjoint momentum operator on the real line, both with infinite codimension. However, only one of the two extensions will be closable while the other one fails to be. 

Finally, in Section \ref{sec:restrictions}, we start out with a selfadjoint reference operator $S$ and use our previously obtained result to construct selfadjoint operators $C_{\phi,\vartheta}$ with the property that $(A+i)^{-1}-(C_{\phi,\vartheta}+i)^{-1}$ has rank one. Here, $\phi$ is a Hilbert space valued parameter and $\vartheta\in(-\pi,\pi]$. As an application, we determine the selfadjoint extensions of Laplacians with a generalized boundary condition (Example \ref{ex:laplacian}). As questions of singular perturbations of selfadjoint operators have been investigated  by numerous authors, \cite{AKN2007, AKN2008, HK2009, KS95, KuraKuro2004, KuroNaga2001, KuzhelZnojil2017, Posilicano} -- just to name a few -- we do not claim that this section contains any new results. It should rather be viewed as another application of the results obtained in the preceding sections.

Let us now fix some notations and conventions: 

When speaking of a Hilbert space $\mathcal{H}$, we always assume $\mathcal{H}$ to be complex. Moreover, the scalar product $\langle\cdot,\cdot\rangle$ on $\mathcal{H}$ is supposed to be antilinear in the first component and linear in the second component.

 Given an operator $A$ on $\mathcal{H}$ with domain $\mathcal{D}(A)$, we denote its graph as $\Gamma(A)$, i.e.\
\begin{equation*}
\Gamma(A)=\{(\psi,A\psi)\in\mathcal{H}\oplus\mathcal{H}:\psi\in\mathcal{D}(A)\}\:.
\end{equation*}
The graph norm of $A$ -- denoted by $\|\cdot\|_{\Gamma(A)}$ -- is given by
\begin{equation*}
\|f\|_{\Gamma(A)}^2=\|f\|^2+\|Af\|^2
\end{equation*}
for any $f\in\mathcal{D}(A)$. The closure of a subspace $\mathcal{K}\subset\mathcal{H}$ with respect to a suitable norm $\|\cdot\|_*$ is denoted by $\overline{\mathcal{K}}^{\|\cdot\|_{*}}$.
Finally, for any closed subspace $\mathcal{M}\subset\mathcal{H}$, let $P(\mathcal{M})$ denote the orthogonal projection onto $\mathcal{M}$.
\section{Main result} \label{sec:theory} 
Given a closed and densely defined operator $A$ on a Hilbert space $\mathcal{H}$, let us use subspaces $\mathfrak{M}\subset\mathcal{D}(A^*)$ in order to parametrize extensions $A_{\mathfrak{M}}$ of $A$:
\begin{definition} \label{def:extension} \normalfont
Let $A$ be a densely defined and closed operator on a complex Hilbert space $\mathcal{H}$. Moreover, let $\mathfrak{M}\subset\mathcal{D}(A^*)$ be a linear space such that
\begin{equation} \label{eq:precloscon}
\ker(A^*)\cap\mathfrak{M}=\{0\}\quad\text{and}\quad\{A^*\phi:\phi\in\mathfrak{M}\}\cap\mathcal{D}(A)=\{0\}\:.
\end{equation}
Then, the operator $A_\mathfrak{M}$, which is an extension of $A$, is defined as
\begin{align*}
A_\mathfrak{M}:\qquad\mathcal{D}(A_\mathfrak{M})&=\mathcal{D}(A)\dot{+}\{A^*\phi:\phi\in\mathfrak{M}\}\\
f+A^*\phi&\mapsto Af-\phi.
\end{align*}
\begin{remark}
Condition \eqref{eq:precloscon} guarantees that $A_\mathfrak{M}$ is well-defined.
\end{remark}
\end{definition}
The proofs of our main theorem will rely on this elementary classical result:
\begin{proposition}[{\cite[Satz 2.49 b]{Weid1}}] A densely defined operator $A$ on a Hilbert space $\mathcal{H}$ is closable if and only if its adjoint $A^*$ is densely defined.
\label{prop:weidmann}
\end{proposition}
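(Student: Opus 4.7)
The plan is to exploit the classical identification of $\Gamma(A)^{\perp}$ with a unitary image of $\Gamma(A^*)$ inside $\mathcal{H}\oplus\mathcal{H}$. Introduce the unitary $V:\mathcal{H}\oplus\mathcal{H}\to\mathcal{H}\oplus\mathcal{H}$ defined by $V(u,v)=(-v,u)$. A direct computation using the defining relation of the adjoint shows that $(x,y)\in\Gamma(A)^{\perp}$ if and only if $y\in\mathcal{D}(A^*)$ and $x=-A^*y$, which is to say
\begin{equation*}
\Gamma(A)^{\perp}=V(\Gamma(A^*)).
\end{equation*}
Since $A^*$ is always closed, $V(\Gamma(A^*))$ is closed as well, and taking orthogonal complements yields the key identity $\overline{\Gamma(A)}=V(\Gamma(A^*))^{\perp}$. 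This identity requires only that $A$ be densely defined so that $A^*$ is meaningful, and in particular carries no closability assumption.

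For the ``if'' direction, assume $\mathcal{D}(A^*)$ is dense in $\mathcal{H}$. Then $A^{**}$ is a well-defined operator, and it is closed because every adjoint is. Moreover $A\subseteq A^{**}$: for any $f\in\mathcal{D}(A)$, the relation $\langle A^*g,f\rangle=\langle g,Af\rangle$ holds for all $g\in\mathcal{D}(A^*)$, which is precisely the defining condition for $f\in\mathcal{D}(A^{**})$ with $A^{**}f=Af$. Hence $A$ admits a closed extension and is therefore closable.

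For the ``only if'' direction, I argue by contraposition: suppose $\mathcal{D}(A^*)$ is not dense and pick $z\neq 0$ with $z\perp\mathcal{D}(A^*)$. For every $g\in\mathcal{D}(A^*)$ one computes
\begin{equation*}
\langle (0,z),V(g,A^*g)\rangle=\langle (0,z),(-A^*g,g)\rangle=\langle z,g\rangle=0,
\end{equation*}
so $(0,z)\perp V(\Gamma(A^*))$, and hence $(0,z)\in\overline{\Gamma(A)}$ by the identity above. Since $z\neq 0$, the set $\overline{\Gamma(A)}$ contains a vector of the form $(0,z)$ with $z\neq 0$ and consequently cannot be the graph of any operator, so $A$ fails to be closable.

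The bulk of the work is hidden in the graph-orthogonality identity $\Gamma(A)^{\perp}=V(\Gamma(A^*))$; once this is in hand, both implications reduce to essentially a one-line argument. The only real subtlety is being careful with the sign and with the chosen convention for the inner product (antilinear in the first component, as fixed in the introduction) when verifying the identity.
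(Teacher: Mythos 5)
Your proof is correct, and there is nothing in the paper to compare it against: the paper states this proposition without proof, citing it as a classical result ([Satz 2.49 b] in Weidmann). Your argument via the unitary $V(u,v)=(-v,u)$, the identity $\Gamma(A)^{\perp}=V(\Gamma(A^*))$, and the resulting characterization $\overline{\Gamma(A)}=V(\Gamma(A^*))^{\perp}$ is exactly the standard textbook proof of this fact, and the details (the sign in $x=-A^*y$, the antilinear-in-the-first-slot convention, and the observation that $(0,z)\in\overline{\Gamma(A)}$ with $z\neq 0$ obstructs closability) all check out.
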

Now, let us show that if $\mathfrak{M}$ satisfies the additional requirement \eqref{eq:assumptions}, then $A_{\mathfrak{M}}$ is also closable. 
\begin{lemma} Let $\mathfrak{M}\subset\mathcal{D}(A^*)$ be such that
\begin{equation} \label{eq:assumptions}
\ker A^*\cap \overline{\mathfrak{M}}^{\|\cdot\|_{\Gamma(A^*)}}=\{0\}\quad\text{and}\quad\{A^*\phi:\phi\in\overline{\mathfrak{M}}^{\|\cdot\|_{\Gamma(A^*)}}\}\cap\mathcal{D}(A)=\{0\}.
\end{equation}
Then, the operator $A_\mathfrak{M}$ as defined in Definition \ref{def:extension} is closable, where
\begin{equation*}
\overline{A_\mathfrak{M}}=A_{\overline{\mathfrak{M}}^{\|\cdot\|_{\Gamma(A^*)}}}\:.
\end{equation*}
\label{lemma:closable}
\end{lemma}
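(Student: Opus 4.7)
\begin{idea}
The plan is to work entirely at the level of graphs in $\mathcal{H}\oplus\mathcal{H}$ and exploit the fact that the summand ``$A^*\phi$'' in the definition of $A_\mathfrak{M}$ contributes a piece that is orthogonal to $\Gamma(A)$. More precisely, I would rewrite
\begin{equation*}
\Gamma(A_\mathfrak{M})=\Gamma(A)+\mathcal{G}(\mathfrak{M}),\qquad \mathcal{G}(\mathfrak{M}):=\{(A^*\phi,-\phi):\phi\in\mathfrak{M}\}\subset\mathcal{H}\oplus\mathcal{H},
\end{equation*}
and check using $\langle f,A^*\phi\rangle=\langle Af,\phi\rangle$ that $\Gamma(A)\perp\mathcal{G}(\mathfrak{M})$. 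Condition \eqref{eq:precloscon} for $\mathfrak{M}$ (and then for its graph-norm closure $\mathfrak{N}:=\overline{\mathfrak{M}}^{\|\cdot\|_{\Gamma(A^*)}}$, which is inherited from \eqref{eq:assumptions}) ensures that this decomposition is a genuine direct sum and hence that $A_\mathfrak{M}$ and $A_\mathfrak{N}$ are well-defined.

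Next I would take closures. Since $A$ is closed, $\Gamma(A)$ is closed, so orthogonality gives
\begin{equation*}
\overline{\Gamma(A_\mathfrak{M})}=\Gamma(A)\oplus\overline{\mathcal{G}(\mathfrak{M})}.
\end{equation*}
A sequence $(A^*\phi_n,-\phi_n)$ in $\mathcal{G}(\mathfrak{M})$ converges in $\mathcal{H}\oplus\mathcal{H}$ precisely when $\phi_n$ converges in the graph norm of $A^*$; closedness of $A^*$ then identifies the limit as $(A^*\phi,-\phi)$ with $\phi\in\mathfrak{N}$. Thus $\overline{\mathcal{G}(\mathfrak{M})}=\mathcal{G}(\mathfrak{N})$ and
\begin{equation*}
\overline{\Gamma(A_\mathfrak{M})}=\Gamma(A)\oplus\mathcal{G}(\mathfrak{N})=\Gamma(A_\mathfrak{N}).
\end{equation*}

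The remaining step, and the only place where the two parts of assumption \eqref{eq:assumptions} are actually used in tandem, is to verify that this closed subspace is the graph of an operator, i.e., that no nonzero element of the form $(0,h)$ lies in it. Suppose $(0,h)=(f,Af)+(A^*\phi,-\phi)$ with $f\in\mathcal{D}(A)$ and $\phi\in\mathfrak{N}$. The first coordinate gives $A^*\phi=-f\in\mathcal{D}(A)$; by the second half of \eqref{eq:assumptions} this forces $A^*\phi=0$, hence $f=0$ and $\phi\in\ker A^*\cap\mathfrak{N}$; by the first half of \eqref{eq:assumptions} we then get $\phi=0$, and finally $h=Af-\phi=0$. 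This simultaneously shows that $A_\mathfrak{M}$ is closable and that $A_\mathfrak{N}$ is closed, and combined with the previous paragraph it yields $\overline{A_\mathfrak{M}}=A_\mathfrak{N}$.

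The only subtle point I expect to have to be careful about is the ``closure of a sum equals sum of closures'' step: it is valid here precisely because $\Gamma(A)$ is already closed and is orthogonal to $\mathcal{G}(\mathfrak{M})$, not merely because $\Gamma(A)$ and $\mathcal{G}(\mathfrak{M})$ have trivial intersection. Everything else is essentially bookkeeping with the definitions.
\end{idea}
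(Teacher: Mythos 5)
Your proof is correct and takes essentially the same route as the paper: the identical orthogonal decomposition $\Gamma(A_\mathfrak{M})=\Gamma(A)\oplus'\mathcal{G}(\mathfrak{M})$ in $\mathcal{H}\oplus\mathcal{H}$, the identification $\overline{\mathcal{G}(\mathfrak{M})}=\mathcal{G}(\overline{\mathfrak{M}}^{\|\cdot\|_{\Gamma(A^*)}})$ via closedness of $A^*$, and the same final check that the resulting closed subspace is a graph using both halves of \eqref{eq:assumptions}. The subtlety you flag about closures of orthogonal sums is exactly the point the paper relies on as well, so nothing is missing.
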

\begin{proof} 
Firstly, observe that Condition \eqref{eq:assumptions} ensures that the operator $A_{\overline{\mathfrak{M}}^{\|\cdot\|_{\Gamma(A^*)}}}$ is well-defined.

Now, let us show that $\mathcal{D}(A)$ and $\{A^*\phi:\phi\in\mathfrak{M}\}$ are orthogonal with respect to the inner product induced by the graph norm of $A_\mathfrak{M}$. Thus, take any $f\in\mathcal{D}(A)$ and any $\phi\in\mathfrak{M}$ and consider
\begin{equation*}
\langle f,A^*\phi\rangle_{\Gamma(A_{\mathfrak{M}})}=\langle f,A^*\phi\rangle+\langle A_\mathfrak{M}f,A_\mathfrak{M}A^*\phi\rangle=\langle f,A^*\phi\rangle+\langle Af,-\phi\rangle=\langle f,A^*\phi\rangle-\langle f,A^*\phi\rangle=0\:.
\end{equation*}
This implies that
$$\Gamma(A_\mathfrak{M})=\Gamma(A)\oplus'\{(A^*\phi,-\phi):\phi\in\mathfrak{M}\}\:,$$
where $\oplus'$ denotes the orthogonal sum in $\mathcal{H}\oplus\mathcal{H}$.
Closing with respect to the norm of $\mathcal{H}\oplus\mathcal{H}$ therefore yields
$$\overline{\Gamma(A_\mathfrak{M})}=\overline{\Gamma(A)}\oplus'\overline{\{(A^*\phi,-\phi):\phi\in\mathfrak{M}\}}\:.$$
Since $A$ is closed by assumption, we get that $\overline{\Gamma(A)}=\Gamma(A)$.
Let us now show that 
$$ \overline{\{(A^*\phi,-\phi):\phi\in\mathfrak{M}\}}=\{(A^*\phi,-\phi):\phi\in\overline{\mathfrak{M}}^{\|\cdot\|_{\Gamma(A^*)}}\}\:.  $$
We begin by showing the $``\subset"$ inclusion:

Let $(\psi,-\chi)\in\overline{\{(A^*\phi,-\phi):\phi\in\mathfrak{M}\}}$,  which means that there exists a sequence $\{(A^*\phi_n,-\phi_n)\}_{n=1}^\infty$, where $\{\phi_n\}_{n=1}^\infty\subset\mathfrak{M}$, such that 
\begin{equation*}
\|\psi-A^*\phi_n\|^2+\|\chi-\phi_n\|^2\overset{n\rightarrow\infty}{\longrightarrow}0\:,
\end{equation*}
which means in particular that $\phi_n\rightarrow\chi$ and $A^*\phi_n\rightarrow\psi$. Since $A^*$ is closed, this implies that $\chi\in\mathcal{D}(A^*)$ and $\psi=A^*\chi$.
Hence, any element of $\overline{\{(A^*\phi,-\phi):\phi\in\mathfrak{M}\}}$ is actually of the form $(A^*\chi,-\chi)$ where $\chi\in\mathcal{D}(A^*)$. Furthermore, there exists a sequence $\{\phi_n\}_{n=1}^\infty\subset\mathfrak{M}$ such that
\begin{equation*}
\|A^*(\chi-\phi_n)\|^2+\|\chi-\phi_n\|^2=\|\chi-\phi_n\|^2_{\Gamma(A^*)}\overset{n\rightarrow\infty}{\longrightarrow}0\:,
\end{equation*}
which means that $\chi\in\overline{\mathfrak{M}}^{\|\cdot\|_{\Gamma(A^*)}}$.

Next, let us show the $``\supset"$ inclusion:

To see this, we need to show that if $\phi\in\overline{\mathfrak{M}}^{\|\cdot\|_{\Gamma(A^*)}}$, this implies that $(A^*\phi,-\phi)\in\overline{\{(A^*\phi,-\phi):\phi\in\mathfrak{M}\}}$. But if $\phi\in\overline{\mathfrak{M}}^{\|\cdot\|_{\Gamma(A^*)}}$, there exists a sequence $\{\phi_n\}_{n=1}^\infty\subset\mathfrak{M}$ such that 
\begin{equation*}
\|\phi-\phi_n\|^2_{\Gamma(A^*)}\overset{n\rightarrow\infty}{\longrightarrow}0
\end{equation*}
and since
$$ \|\phi-\phi_n\|^2_{\Gamma(A^*)}=\|\phi-\phi_n\|^2+\|A^*(\phi-\phi_n)\|^2=\|(A^*\phi,-\phi)-(A^*\phi_n,-\phi_n)\|^2_{\mathcal{H}\oplus\mathcal{H}}\:,$$
this shows that $(A^*\phi,-\phi)\in\overline{\{(A^*\phi,-\phi):\phi\in\mathfrak{M}\}}$. We therefore have shown that
\begin{equation*}
\overline{\Gamma(A_\mathfrak{M})}=\Gamma(A)\oplus ' \{A^*\phi:\phi\in
\overline{\mathfrak{M}}^{\|\cdot\|_{\Gamma(A^*)}}\}\:.
\end{equation*}
Let us finish by arguing that $\overline{\Gamma(A_\mathfrak{M})}$ is the graph of an operator, which means that we need to show that $(0,g)\in\overline{\Gamma(A_\mathfrak{M})}$ implies that $g=0$. But any element of $\overline{\Gamma(A_\mathfrak{M})}$ is of the form $(f+A^*\phi, Af-\phi)$, where $f\in\mathcal{D}(A)$ and $\phi\in\overline{\mathfrak{M}}^{\|\cdot\|_{\Gamma(A^*)}}$. Moreover, by \eqref{eq:assumptions}, we have that $f+A^*\phi=0$ if and only if $f=0$ and $A^*\phi=0$. Since --- again by \eqref{eq:assumptions} --- we have that $A^*\phi=0$ if and only if $\phi=0$, this yields that $(f+A^*\phi,Af-\phi)=(0,Af-\phi)=(0,0)$, which implies that $\overline{\Gamma(A_\mathfrak{M})}$ is the graph of an operator which therefore must be the closure $\overline{A_\mathfrak{M}}$ of $A_\mathfrak{M}$. In particular, this implies that $A_\mathfrak{M}$ is closable. Moreover, $\overline{A_\mathfrak{M}}$ is equal to $A_{\overline{\mathfrak{M}}^{\|\cdot\|_{\Gamma(A^*)}}}$. This shows the lemma.
\end{proof}
\begin{remark} For the case that $\mathfrak{M}$ is finite-dimensional, Conditions \eqref{eq:precloscon} and \eqref{eq:assumptions} coincide. Later, we will give an example of an infinite-dimensional subspace $\mathfrak{M}$ which satisfies Condition \eqref{eq:precloscon}, but fails to satisfy Condition \eqref{eq:assumptions} (cf.\ Section \ref{sec:clonoclo}). In this case, the operator $A_\mathfrak{M}$ will be non-closable (cf.\ also Corollary \ref{coro:nonclosable}.)
\end{remark}
The following lemma provides an alternative characterization of $A_\mathfrak{M}$.
\begin{lemma} Let $\mathfrak{M}\subset\mathcal{D}(A^*)$ be a linear space satisfying Condition \eqref{eq:precloscon} and let the operator $B_\mathfrak{M}$ be given by:
\begin{align*}
B_\mathfrak{M}:\qquad\mathcal{D}(B_\mathfrak{M})&=\{f\in\mathcal{H}: \exists\phi\in\mathfrak{M}\:\:\text{such that}\:\:f-A^*\phi\in\mathcal{D}(A)\}\\
B_\mathfrak{M}f&=A(f-A^*\phi)-\phi\:.
\end{align*}
Then, $B_\mathfrak{M}=A_\mathfrak{M}$.
\end{lemma}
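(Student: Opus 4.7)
The plan is to establish equality by verifying (i) that $B_{\mathfrak{M}}$ is well-defined as a single-valued operator, (ii) that $\mathcal{D}(B_{\mathfrak{M}}) = \mathcal{D}(A_{\mathfrak{M}})$, and (iii) that the two operators agree on this common domain. Each step is a direct consequence of Condition \eqref{eq:precloscon} together with the definitions, so I don't expect any serious obstacle.

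For well-definedness, suppose $g \in \mathcal{D}(B_{\mathfrak{M}})$ admits two decompositions $g - A^*\phi_1 \in \mathcal{D}(A)$ and $g - A^*\phi_2 \in \mathcal{D}(A)$ with $\phi_1, \phi_2 \in \mathfrak{M}$. Subtracting gives $A^*(\phi_1 - \phi_2) \in \mathcal{D}(A)$, and since $\phi_1 - \phi_2 \in \mathfrak{M}$ (because $\mathfrak{M}$ is a linear space), the second half of \eqref{eq:precloscon} forces $A^*(\phi_1 - \phi_2) = 0$. Then the first half of \eqref{eq:precloscon} forces $\phi_1 = \phi_2$, so the chosen $\phi$ is unique and the expression $A(g - A^*\phi) - \phi$ is unambiguous.

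For the domains: if $g \in \mathcal{D}(A_{\mathfrak{M}})$, write $g = f + A^*\phi$ with $f \in \mathcal{D}(A)$ and $\phi \in \mathfrak{M}$; then $g - A^*\phi = f \in \mathcal{D}(A)$, so $g \in \mathcal{D}(B_{\mathfrak{M}})$. Conversely, if $g \in \mathcal{D}(B_{\mathfrak{M}})$, pick $\phi \in \mathfrak{M}$ with $f := g - A^*\phi \in \mathcal{D}(A)$; then $g = f + A^*\phi \in \mathcal{D}(A) \dot{+} \{A^*\phi : \phi \in \mathfrak{M}\} = \mathcal{D}(A_{\mathfrak{M}})$, where the directness of the sum is, as already noted by the authors, guaranteed by \eqref{eq:precloscon}.

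Finally, on the common domain we read off
\begin{equation*}
B_{\mathfrak{M}} g = A(g - A^*\phi) - \phi = Af - \phi = A_{\mathfrak{M}}(f + A^*\phi) = A_{\mathfrak{M}} g,
\end{equation*}
which completes the argument. The only subtlety is that in the definition of $B_{\mathfrak{M}}$ the element $\phi$ is chosen existentially, so verifying uniqueness at the outset is essential; once that is in place the remaining content is purely formal.
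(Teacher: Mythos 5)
Your proof is correct and follows essentially the same route as the paper's: the paper verifies the two inclusions $A_\mathfrak{M}\subset B_\mathfrak{M}$ and $B_\mathfrak{M}\subset A_\mathfrak{M}$ by exactly the decompositions you use, and dismisses well-definedness of $B_\mathfrak{M}$ as an obvious consequence of Condition \eqref{eq:precloscon}. The only difference is that you spell out the uniqueness of $\phi$ explicitly, which is a harmless (and arguably welcome) addition.
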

\begin{proof}
Since $\mathfrak{M}$ satisfies Condition \eqref{eq:precloscon}, it is obvious that $B_\mathfrak{M}$ is well--defined.\\
``$A_\mathfrak{M}\subset B_\mathfrak{M}$": Any $f_0+A^*\phi$ with $f_0\in\mathcal{D}(A)$ and $\phi\in\mathfrak{M}$ is also in $\mathcal{D}(B_\mathfrak{M})$ as $(f_0+A^*\phi-A^*\phi)\in\mathcal{D}(A)$. Now, consider
\begin{equation*}
B_\mathfrak{M}(f_0+A^*\phi)=A(f_0+A^*\phi-A^*\phi)-\phi=Af_0-\phi=A_\mathfrak{M}(f_0+A^*\phi)\:,
\end{equation*}
which shows the first inclusion.\\
``$A_\mathfrak{M}\supset B_\mathfrak{M}$": Observe that for any $f\in\mathcal{D}(B_\mathfrak{M})$, there exists a $\phi\in\mathfrak{M}$ such that $f$ can be written as $f=(f-A^*\phi)+A^*\phi$, where $(f-A^*\phi)\in\mathcal{D}(A)$. This implies that $f\in\mathcal{D}(B_\mathfrak{M})$ as well. To finish the proof, consider
\begin{equation*}
A_\mathfrak{M}f=A_\mathfrak{M}(f-A^*\phi+A^*\phi)=A(f-A^*\phi)-\phi=B_\mathfrak{M}f\:.
\end{equation*}
\end{proof}
Next, assuming that $A$ is a densely defined and closed operator on $\mathcal{H}$, let us introduce the restriction $C_\mathfrak{M}(A)$ of $A^*$. We parametrize $C_\mathfrak{M}(A)$ by an orthogonality condition in $\Gamma(A^*)$:
\begin{definition} \normalfont Let $A$ be a closed and densely defined operator on a Hilbert space $\mathcal{H}$. Moreover, let $\mathfrak{M}\subset\mathcal{D}(A^*)$. Then, the operator $C_\mathfrak{M}(A)$ is defined as
\begin{align*}
C_\mathfrak{M}(A):\qquad\mathcal{D}(C_\mathfrak{M}(A))&=\{f\in\mathcal{D}(A^*): \langle f,\phi\rangle+\langle A^*f,A^*\phi\rangle=0\:\:\text{for all}\:\: \phi\in\mathfrak{M}\}\\
C_{\mathfrak{M}}(A)&=A^*\upharpoonright_{\mathcal{D}(C_\mathfrak{M}(A))}\:.
\end{align*} \label{def:restr}
\begin{remark} Even though $C_\mathfrak{M}(A)$ depends on the operator $A$, in most cases we will just write $C_\mathfrak{M}$.
\end{remark}
\begin{remark}
Note that --- unlike for the definition of $A_\mathfrak{M}$ (cf.\ Definition \ref{def:extension}) --- we have not made any additional assumptions on $\mathfrak{M}\subset\mathcal{D}(A^*)$. Moreover, observe that for $\mathfrak{M}\neq\{0\}$, the operator $C_\mathfrak{M}$ is a proper restriction of $A^*$. Indeed, let $0\neq\phi\in\mathfrak{M}$. Since $$\langle\phi,\phi\rangle+\langle A^*\phi,A^*\phi\rangle=\|\phi\|^2+\|A^*\phi\|^2\neq 0\:,$$
this immediately implies that $\phi\notin\mathcal{D}(C_\mathfrak{M})$. 
\end{remark}
\end{definition} 
Next, let us show that if $\mathfrak{M}\subset\mathcal{D}(A^*)$ satisfies Condition \eqref{eq:precloscon}, we then get that $A_{\mathfrak{M}}^*=C_\mathfrak{M}$.
\begin{lemma} Let $A$ be a densely defined and closed operator on a Hilbert space $\mathcal{H}$ and let $\mathfrak{M}\subset\mathcal{D}(A^*)$  satisfy Condition \eqref{eq:precloscon}. Let $A_\mathfrak{M}$ be defined as in Definition \ref{def:extension} and $C_\mathfrak{M}$ be defined as in Definition \ref{def:restr}. Then, $A_\mathfrak{M}^*=C_\mathfrak{M}$. \label{lemma:adjeq}
\end{lemma}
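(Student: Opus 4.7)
The plan is to prove the equality $A_\mathfrak{M}^* = C_\mathfrak{M}$ by checking two inclusions, in both cases starting from the defining identity of the adjoint and exploiting the algebraic direct sum structure $\mathcal{D}(A_\mathfrak{M})=\mathcal{D}(A)\dot{+}\{A^*\phi:\phi\in\mathfrak{M}\}$. The key observation is that, because this is a direct sum, the element $f\in\mathcal{D}(A)$ and the element $A^*\phi$ can be varied independently in the identity $\langle A_\mathfrak{M}(f+A^*\phi),g\rangle = \langle f+A^*\phi,g^*\rangle$, so I can split the problem into two cleanly separated pieces.

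For the inclusion $A_\mathfrak{M}^*\subset C_\mathfrak{M}$, I take $g\in\mathcal{D}(A_\mathfrak{M}^*)$ with $g^*:=A_\mathfrak{M}^*g$. Specializing the adjoint identity to $\phi=0$ gives $\langle Af,g\rangle=\langle f,g^*\rangle$ for every $f\in\mathcal{D}(A)$; since $A$ is densely defined this forces $g\in\mathcal{D}(A^*)$ and $g^*=A^*g$. Next I specialize to $f=0$ and $\phi\in\mathfrak{M}$ arbitrary, obtaining
\begin{equation*}
-\langle\phi,g\rangle=\langle A^*\phi,A^*g\rangle\quad\text{for all }\phi\in\mathfrak{M}.
\end{equation*}
Taking complex conjugates (our inner product is antilinear in the first slot) yields precisely $\langle g,\phi\rangle+\langle A^*g,A^*\phi\rangle=0$, the defining condition of $\mathcal{D}(C_\mathfrak{M})$. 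Hence $g\in\mathcal{D}(C_\mathfrak{M})$ and $A_\mathfrak{M}^*g = A^*g = C_\mathfrak{M}g$.

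For the converse inclusion $C_\mathfrak{M}\subset A_\mathfrak{M}^*$, I pick $g\in\mathcal{D}(C_\mathfrak{M})$ and simply verify the adjoint identity with candidate $g^*=A^*g$. For any $h=f+A^*\phi\in\mathcal{D}(A_\mathfrak{M})$ I compute
\begin{equation*}
\langle A_\mathfrak{M}h,g\rangle=\langle Af,g\rangle-\langle\phi,g\rangle=\langle f,A^*g\rangle-\langle\phi,g\rangle,
\end{equation*}
and the orthogonality condition defining $\mathcal{D}(C_\mathfrak{M})$, after conjugation, reads $\langle\phi,g\rangle = -\langle A^*\phi,A^*g\rangle$. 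Substituting this in gives $\langle A_\mathfrak{M}h,g\rangle=\langle f+A^*\phi,A^*g\rangle=\langle h,A^*g\rangle$, so $g\in\mathcal{D}(A_\mathfrak{M}^*)$ with $A_\mathfrak{M}^*g=A^*g=C_\mathfrak{M}g$, completing the proof.

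I do not expect any serious obstacle here: the argument is just a careful unpacking of the definition of the adjoint, relying on the density of $\mathcal{D}(A)$ (which is built into the hypothesis that $A$ is densely defined) and on the independence of the two summands in the direct sum decomposition of $\mathcal{D}(A_\mathfrak{M})$. The only point requiring mild care is bookkeeping with complex conjugates, since the condition in Definition \ref{def:restr} is phrased with $g$ in the first slot of the inner product while the natural adjoint computation produces $g$ in the second slot; Condition \eqref{eq:precloscon} itself is used only implicitly, via Definition \ref{def:extension}, to guarantee that $A_\mathfrak{M}$ is a well-defined operator in the first place.
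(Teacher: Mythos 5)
Your proof is correct and follows essentially the same route as the paper's: both directions are obtained by unpacking the defining identity of the adjoint, isolating the $\phi=0$ part to get $g\in\mathcal{D}(A^*)$ with $A_\mathfrak{M}^*g=A^*g$, and then reading off the orthogonality condition $\langle g,\phi\rangle+\langle A^*g,A^*\phi\rangle=0$ from the remaining $\mathfrak{M}$-part. The only cosmetic difference is that you set $f=0$ to isolate that part while the paper subtracts the $\phi=0$ identity from the general one; your explicit bookkeeping of the conjugation is a welcome clarification but not a substantive change.
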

\begin{proof} ``$C_\mathfrak{M}\subset A_\mathfrak{M}^*$": Let $g\in\mathcal{D}(C_\mathfrak{M})$, $f\in\mathcal{D}(A)$ and $\phi\in\mathfrak{M}$ and consider
\begin{align*}
\langle g, A_\mathfrak{M}(f+A^*\phi)\rangle=\langle g, Af-\phi\rangle=\langle A^*g, f+A^*\phi\rangle\:,
\end{align*}
where we have used that $g\in\mathcal{D}(A^*)$ and $-\langle g,\phi\rangle=\langle A^*g,A^*\phi\rangle$. This shows that $g\in\mathcal{D}(A_\mathfrak{M}^*)$ and $A_\mathfrak{M}^*g=A^*g=C_\mathfrak{M}g$.\\
``$C_\mathfrak{M}\supset A_\mathfrak{M}^*$": Let $g\in\mathcal{D}(A_\mathfrak{M}^*)$, which means that there exists a $\widetilde{g}\in\mathcal{H}$ such that
\begin{equation} \label{eq:impl1}
\langle \widetilde{g},f+A^*\phi\rangle=\langle g,A_\mathfrak{M}(f+A^*\phi)\rangle=\langle g, Af-\phi\rangle
\end{equation}
for all $f\in\mathcal{D}(A)$ and all $\phi\in\mathfrak{M}$. This holds in particular for the choice $\phi=0$, from which we get that
\begin{equation*}
\langle \widetilde{g},f\rangle=\langle g,A_\mathfrak{M}f\rangle=\langle g,Af\rangle
\end{equation*}
for all $f\in\mathcal{D}(A)$. This implies that $g\in\mathcal{D}(A^*)$ and that $\widetilde{g}=A^*g$. Now, consider again Equation \eqref{eq:impl1}:
\begin{equation*}
\langle A^*g,f+A^*\phi\rangle=\langle \widetilde{g},f+A^*\phi\rangle=\langle g,Af-\phi\rangle=\langle A^*g,f\rangle-\langle g,\phi\rangle\:,
\end{equation*}
which implies that $$\langle g,\phi\rangle+\langle A^*g,A^*\phi\rangle=0$$
for all $\phi\in\mathfrak{M}$. This shows that $g\in\mathcal{D}(C_\mathfrak{M})$ and $A_\mathfrak{M}^*g=A^*g=C_\mathfrak{M}g$, from which the lemma follows.
\end{proof}

\begin{theorem} Let $\mathfrak{M}\subset\mathcal{D}(A^*)$ be a linear space. Then, the operator $C_\mathfrak{M}$ as given in Definition \ref{def:restr} is a closed restriction of $A^*$. Moreover, it is densely defined if and only if $\mathfrak{M}$ satisfies Condition \eqref{eq:assumptions}.
\label{thm:denselydefined}
\end{theorem}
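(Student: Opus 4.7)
For the closedness part, my plan is to observe that the defining condition of $\mathcal{D}(C_{\mathfrak{M}})$ is exactly the orthogonality condition for the graph inner product of $A^{*}$, so $\mathcal{D}(C_{\mathfrak{M}})=\mathfrak{M}^{\perp}$ inside the Hilbert space $(\mathcal{D}(A^{*}),\langle\cdot,\cdot\rangle_{\Gamma(A^{*})})$ (genuinely a Hilbert space because $A^{*}$ is closed). Hence $\mathcal{D}(C_{\mathfrak{M}})$ is closed in the graph norm. Given a Cauchy sequence of graph vectors of $C_{\mathfrak{M}}$ in $\mathcal{H}\oplus\mathcal{H}$, closedness of $A^{*}$ yields a limit $f\in\mathcal{D}(A^{*})$, the convergence automatically takes place in the graph norm, and the limit therefore inherits the orthogonality, so $C_{\mathfrak{M}}$ is closed.

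Before tackling dense-definedness I would record the reduction $C_{\mathfrak{M}}=C_{\mathfrak{N}}$ with $\mathfrak{N}:=\overline{\mathfrak{M}}^{\|\cdot\|_{\Gamma(A^{*})}}$, which is immediate from continuity of the graph inner product in both arguments.

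For the ``if'' direction, I would recycle the lemmas already proven. If $\mathfrak{M}$ satisfies \eqref{eq:assumptions} then \emph{a fortiori} it satisfies \eqref{eq:precloscon}, so $A_{\mathfrak{M}}$ is a well-defined extension of $A$, in particular densely defined. Lemma \ref{lemma:closable} then says $A_{\mathfrak{M}}$ is closable, so by Proposition \ref{prop:weidmann} its adjoint is densely defined, and Lemma \ref{lemma:adjeq} identifies that adjoint with $C_{\mathfrak{M}}$.

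For the ``only if'' direction, I would argue by contraposition. If \eqref{eq:assumptions} fails, then either (i) there is $0\neq\phi\in\ker A^{*}\cap\mathfrak{N}$, or (ii) there is $\phi\in\mathfrak{N}$ with $g:=A^{*}\phi\in\mathcal{D}(A)\setminus\{0\}$. In case (i), the defining condition of $\mathcal{D}(C_{\mathfrak{N}})=\mathcal{D}(C_{\mathfrak{M}})$ collapses to $\langle f,\phi\rangle=0$ for every $f\in\mathcal{D}(C_{\mathfrak{M}})$, exhibiting a nonzero vector orthogonal to $\mathcal{D}(C_{\mathfrak{M}})$ in $\mathcal{H}$. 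In case (ii), using $g\in\mathcal{D}(A)$ to move the adjoint, the orthogonality condition becomes $\langle f,\phi+Ag\rangle=0$ for all $f\in\mathcal{D}(C_{\mathfrak{M}})$. The main obstacle here is to rule out the degenerate case $\phi+Ag=0$: that equation would force $g=A^{*}\phi=-A^{*}Ag$ with $g\in\mathcal{D}(A^{*}A)$, whence $\|Ag\|^{2}=\langle g,A^{*}Ag\rangle=-\|g\|^{2}$, contradicting $g\neq 0$ by non-negativity of $A^{*}A$. Thus $\phi+Ag\neq 0$, and once again $\mathcal{D}(C_{\mathfrak{M}})$ fails to be dense in $\mathcal{H}$.
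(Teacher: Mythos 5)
Your proposal is correct and follows essentially the same route as the paper: closedness via the observation that $\mathcal{D}(C_\mathfrak{M})$ is the graph-norm orthogonal complement of $\mathfrak{M}$, sufficiency via Lemma \ref{lemma:closable}, Proposition \ref{prop:weidmann} and Lemma \ref{lemma:adjeq}, and necessity by passing to $\overline{\mathfrak{M}}^{\|\cdot\|_{\Gamma(A^*)}}$ and exhibiting the nonzero vector $\phi+AA^*\phi$ orthogonal to $\mathcal{D}(C_\mathfrak{M})$, with the same positivity argument ruling out the degenerate case. The only differences (splitting the failure of \eqref{eq:assumptions} into two cases, and phrasing closedness in the graph Hilbert space rather than as an intersection of closed sets in $\mathcal{H}\oplus\mathcal{H}$) are cosmetic.
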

\begin{proof}
The fact that $C_\mathfrak{M}$ is a restriction of $A^*$ is a trivial consequence of its definition. It is also not hard to see that 
\begin{equation} \label{eq:perpi}
\Gamma(C_\mathfrak{M})=\Gamma(A^*)\cap\{(\phi,A^*\phi):\phi\in\mathfrak{M}\}^{\perp}\:,
\end{equation}
where the orthogonal complement is taken in $\mathcal{H}\oplus\mathcal{H}$. But this implies that $\Gamma(C_\mathfrak{M})$ is closed in $\mathcal{H}\oplus\mathcal{H}$, from which we deduce that $C_\mathfrak{M}$ is a closed operator.
\\
Let us now show that Condition \eqref{eq:assumptions} is necessary for $C_\mathfrak{M}$ to be densely defined. Assume that there exists a $0\neq\phi\in\overline{\mathfrak{M}}^{\|\cdot\|_{\Gamma(A^*)}}$ such that $A^*\phi\in\mathcal{D}(A)$. This would mean that there exists a sequence $\{\phi_n\}_{n=1}^\infty\subset\mathfrak{M}$ such that 
$$\lim_{n\rightarrow\infty}\left(\|\phi_n-\phi\|^2+\|A^*\phi_n-A^*\phi\|^2\right)=0\:.$$
Since for any $n\in\N$ and any $f\in\mathcal{D}(C_\mathfrak{M})$ we have
$$\langle f,\phi_n\rangle+\langle A^*f,A^*\phi_n\rangle=0$$
and 
$$ \langle f,\phi\rangle+\langle A^*f,A^*\phi\rangle=\lim_{n\rightarrow\infty}(\langle f,\phi_n\rangle+\langle A^*f,A^*\phi_n\rangle)=0\:,$$
we obtain the condition $$\langle f, (\idty+AA^*)\phi\rangle=0$$ for all $f\in\mathcal{D}(C_\mathfrak{M})$. This means that $\mathcal{D}(A_\mathfrak{M}^*)\perp\spann\{(\idty+AA^*)\phi\}$, which implies that $C_\mathfrak{M}$ is not densely defined. Note that for $\phi\neq 0$, it cannot happen that $(\idty+AA^*)\phi=0$, since we would get
\begin{equation*}
\phi=-AA^*\phi\quad\Rightarrow\quad \|\phi\|^2=\langle\phi,\phi\rangle=-\langle\phi,AA^*\phi\rangle=-\|A^*\phi\|^2\:,
\end{equation*}
which is impossible.
\\
Let us now show that Condition \eqref{eq:assumptions} is sufficient for $C_\mathfrak{M}$ to be densely defined. This follows from Lemma \ref{lemma:closable}, from which we have that $A_\mathfrak{M}$ is closable. Proposition \ref{prop:weidmann} then implies that $A_\mathfrak{M}^*$ is densely defined, which together with the fact that $A_\mathfrak{M}^*=C_\mathfrak{M}$ by Lemma \ref{lemma:adjeq} finishes the proof.
\end{proof}
\begin{corollary} Let $\mathfrak{M}\subset\mathcal{D}(A^*)$ be a linear space that satisfies \eqref{eq:precloscon} but does not satisfy \eqref{eq:assumptions}. Then, the operator $A_\mathfrak{M}$ as defined in Definition \ref{def:extension} is not closable. \label{coro:nonclosable}
\end{corollary}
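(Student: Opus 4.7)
The plan is to prove the contrapositive by stringing together the three preceding results. First I would invoke Lemma \ref{lemma:adjeq}: since $\mathfrak{M}$ satisfies Condition \eqref{eq:precloscon}, the operator $A_\mathfrak{M}$ is well-defined and its adjoint is precisely $C_\mathfrak{M}$.

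Next I would apply Theorem \ref{thm:denselydefined} (the ``only if'' direction): because $\mathfrak{M}$ fails Condition \eqref{eq:assumptions}, the operator $C_\mathfrak{M}$ fails to be densely defined. Combining this with the identification $A_\mathfrak{M}^* = C_\mathfrak{M}$ from the previous step, we conclude that $A_\mathfrak{M}^*$ is not densely defined.

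Finally I would appeal to Proposition \ref{prop:weidmann}: a densely defined operator is closable if and only if its adjoint is densely defined. Since $A_\mathfrak{M}$ is densely defined (it extends the densely defined operator $A$) but its adjoint fails to be, $A_\mathfrak{M}$ cannot be closable, which is the desired conclusion.

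There is no genuine obstacle here; the corollary is essentially a bookkeeping consequence of Lemma \ref{lemma:adjeq}, Theorem \ref{thm:denselydefined}, and Proposition \ref{prop:weidmann}. The only mild subtlety worth flagging is that Lemma \ref{lemma:adjeq} requires Condition \eqref{eq:precloscon} (which is assumed) in order for $A_\mathfrak{M}$ to make sense and for the adjoint identification to hold — so the hypothesis that \eqref{eq:precloscon} is satisfied is essential, not just decorative.
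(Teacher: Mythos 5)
Your argument is exactly the paper's proof: identify $A_\mathfrak{M}^*=C_\mathfrak{M}$ via Lemma \ref{lemma:adjeq}, conclude from Theorem \ref{thm:denselydefined} that this adjoint is not densely defined since \eqref{eq:assumptions} fails, and then apply Proposition \ref{prop:weidmann}. The proposal is correct and takes the same route, step for step.
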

\begin{proof} By Lemma \ref{lemma:adjeq}, we have that $A_{\mathfrak{M}}^*=C_\mathfrak{M}$. However, since $\mathfrak{M}$ does not satisfy Condition \eqref{eq:assumptions}, we have by Theorem \ref{thm:denselydefined} that $C_\mathfrak{M}=A_\mathfrak{M}^*$ is not densely defined. Thus, by Proposition \ref{prop:weidmann}, $A_\mathfrak{M}$ is not closable.
\end{proof}
Let us summarize our results with the following theorem, which establishes a one-to-one correspondence between all closed extensions of a given densely defined and closed operator and all subspaces $\mathfrak{M}\subset\mathcal{D}(A^*)$ that are closed with respect to $\|\cdot\|_{\Gamma(A^*)}$ and that satisfy Condition \eqref{eq:assumptions}:
\begin{theorem} \label{thm:posta} Let $A$ be a densely defined and closed operator. Then, there is a one-to-one correspondence between all closed extensions of $A$ and all subspaces $\mathfrak{M}\subset\mathcal{D}(A^*)$ that are closed with respect to the graph norm $\|\cdot\|_{\Gamma(A^*)}$ and that satisfy the conditions given in \eqref{eq:assumptions}. These closed extensions of $A$ are given by
\begin{align} \label{eq:extform}
A_\mathfrak{M}:\qquad\mathcal{D}(A_\mathfrak{M})&=\mathcal{D}(A)\dot{+}\{A^*\phi: \phi\in\mathfrak{M}\}\notag\\
f+A^*\phi&\mapsto Af-\phi\:.
\end{align} \label{thm:corresp}
\end{theorem}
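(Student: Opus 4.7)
The plan is to set up the correspondence explicitly in both directions, and the key structural fact underlying everything is that the map $\phi\mapsto(A^*\phi,-\phi)$ is an isometric bijection from $(\mathcal{D}(A^*),\|\cdot\|_{\Gamma(A^*)})$ onto $\Gamma(A)^\perp$, with the orthogonal complement taken in $\mathcal{H}\oplus\mathcal{H}$. Indeed, $\|(A^*\phi,-\phi)\|^2_{\mathcal{H}\oplus\mathcal{H}}=\|\phi\|^2_{\Gamma(A^*)}$, and a direct computation using the definition of the adjoint gives $\Gamma(A)^\perp=\{(A^*\phi,-\phi):\phi\in\mathcal{D}(A^*)\}$.

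For the forward direction, suppose $\mathfrak{M}\subset\mathcal{D}(A^*)$ is closed in $\|\cdot\|_{\Gamma(A^*)}$ and satisfies Condition~\eqref{eq:assumptions}. Lemma~\ref{lemma:closable} then gives $\overline{A_\mathfrak{M}}=A_{\overline{\mathfrak{M}}^{\|\cdot\|_{\Gamma(A^*)}}}=A_\mathfrak{M}$, so $A_\mathfrak{M}$ is already a closed extension of $A$. For the reverse direction, let $B$ be an arbitrary closed extension of $A$. Since $\Gamma(A)\subset\Gamma(B)$ are both closed in $\mathcal{H}\oplus\mathcal{H}$, we obtain the orthogonal decomposition $\Gamma(B)=\Gamma(A)\oplus'V$ with $V:=\Gamma(B)\cap\Gamma(A)^\perp$, and the natural candidate for the parameter subspace is
$$\mathfrak{M}:=\{\phi\in\mathcal{D}(A^*):(A^*\phi,-\phi)\in V\}\:.$$
The isometry above immediately shows that $\mathfrak{M}$ is graph-norm closed (as the preimage of the closed set $V$), and by construction $\{(A^*\phi,-\phi):\phi\in\mathfrak{M}\}=V$; invoking the graph identity from the proof of Lemma~\ref{lemma:closable} then yields $\Gamma(A_\mathfrak{M})=\Gamma(A)\oplus'V=\Gamma(B)$, i.e.\ $A_\mathfrak{M}=B$.

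The main obstacle is verifying that this $\mathfrak{M}$ actually satisfies Condition~\eqref{eq:assumptions}; since $\mathfrak{M}$ is graph-norm closed, this reduces to Condition~\eqref{eq:precloscon}. Both parts follow from the fact that $\Gamma(B)$ is the graph of an operator, hence contains no element $(0,y)$ with $y\neq 0$. If $\phi\in\mathfrak{M}\cap\ker A^*$, then $(A^*\phi,-\phi)=(0,-\phi)\in V\subset\Gamma(B)$ forces $\phi=0$. If instead $\phi\in\mathfrak{M}$ with $A^*\phi\in\mathcal{D}(A)$, subtracting $(A^*\phi,AA^*\phi)\in\Gamma(A)\subset\Gamma(B)$ from $(A^*\phi,-\phi)\in V\subset\Gamma(B)$ gives $(0,-\phi-AA^*\phi)\in\Gamma(B)$, whence $\phi=-AA^*\phi$; pairing with $\phi$ then yields $\|\phi\|^2+\|A^*\phi\|^2=0$, so again $\phi=0$.

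Finally, injectivity of the correspondence is immediate: from the identity $\Gamma(A_\mathfrak{M})=\Gamma(A)\oplus'\{(A^*\phi,-\phi):\phi\in\mathfrak{M}\}$ and the fact that the second summand equals $\Gamma(A_\mathfrak{M})\cap\Gamma(A)^\perp$, the subspace $\mathfrak{M}$ is recovered from $A_\mathfrak{M}$ through the isometry, so two graph-norm-closed subspaces producing the same extension must coincide.
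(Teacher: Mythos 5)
Your proof is correct, and it takes a genuinely more direct route than the paper. The paper passes through the adjoint: it invokes Proposition \ref{prop:weidmann} to see that $B^*$ is a densely defined closed restriction of $A^*$, decomposes $\Gamma(A^*)=\Gamma(B^*)\oplus'\Gamma$, identifies $B^*$ with $C_\mathfrak{M}$, then uses Theorem \ref{thm:denselydefined} to extract Condition \eqref{eq:assumptions}, Lemma \ref{lemma:adjeq} to get $C_\mathfrak{M}=A_\mathfrak{M}^*$, and finally biduality $B=B^{**}=A_\mathfrak{M}^{**}=A_\mathfrak{M}$. You instead work primally on $\Gamma(B)$ itself: the identification $\Gamma(A)^\perp=\{(A^*\phi,-\phi):\phi\in\mathcal{D}(A^*)\}$, together with the decomposition $\Gamma(B)=\Gamma(A)\oplus'\bigl(\Gamma(B)\cap\Gamma(A)^\perp\bigr)$, hands you $\mathfrak{M}$ and its graph-norm closedness at once, and Condition \eqref{eq:precloscon} (equivalently \eqref{eq:assumptions}, since $\mathfrak{M}$ is closed) drops out of the elementary fact that a graph contains no nonzero element of the form $(0,y)$ --- your verification of the second condition via $\phi=-AA^*\phi\Rightarrow\|\phi\|^2=-\|A^*\phi\|^2$ is exactly the computation the paper performs inside Theorem \ref{thm:denselydefined}, just relocated. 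Your $V$ is the image of the paper's $\Gamma$ under the standard unitary relating $\Gamma(A^*)$ to $\Gamma(A)^\perp$, so the two parametrizations agree; what your route buys is independence from Proposition \ref{prop:weidmann}, Lemma \ref{lemma:adjeq} and Theorem \ref{thm:denselydefined} (only the graph-decomposition identity from Lemma \ref{lemma:closable} is needed), while the paper's dual route buys Corollary \ref{coro:denserestrictions} essentially for free. The only presentational caveat is that you should verify Condition \eqref{eq:precloscon} for your $\mathfrak{M}$ \emph{before} writing down $A_\mathfrak{M}$ and its graph identity, since Definition \ref{def:extension} requires it for well-definedness; as you do supply that verification, the argument is complete.
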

\begin{proof} Let $B$ be any closed extension of $A$. By Proposition \ref{prop:weidmann}, this implies that $B^*$ is densely defined and since $B^*\subset A^*$, this means that $B^*$ is a closed densely defined restriction of $A^*$. Thus,
$$ \Gamma:=\Gamma(A^*)\ominus\Gamma(B^*) $$
is a closed subspace of $\Gamma(A^*)$ and moreover we have $\Gamma(B^*)=\Gamma(A^*)\ominus\Gamma=\Gamma(A^*)\cap\Gamma^{\perp}$. Defining $\mathfrak{M}:=\{\phi\in\mathcal{D}(A^*): (\phi,A^*\phi)\in\Gamma\}$, we then may write
\begin{align}
B^*:\quad\mathcal{D}(B^*)&=\{f\in\mathcal{D}(A^*): \langle f,\phi\rangle+\langle A^*f,A^*\phi\rangle=0\:\:\text{for all}\:\:\phi\in\mathfrak{M}\}\notag\\
B^*&=A^*\upharpoonright_{\mathcal{D}(B^*)}\:.
\label{eq:auxeq}
\end{align}
Moreover, since $\Gamma$ is closed in $\mathcal{H}\oplus\mathcal{H}$, observe that $\mathfrak{M}$ is closed with respect to the graph norm $\|\cdot\|_{\Gamma(A^*)}$, since for any $\phi\in\mathfrak{M}$ we have
\begin{equation*}
\|\phi\|^2_{\Gamma(A^*)}=\|\phi\|^2+\|A^*\phi\|^2=\|(\phi,A^*\phi)\|^2_{\mathcal{H}\oplus\mathcal{H}}\:.
\end{equation*}
Now, \eqref{eq:auxeq} means that $B^*= C_\mathfrak{M}$, where $C_\mathfrak{M}$ is defined as in Definition \ref{def:restr}. By Theorem \ref{thm:denselydefined}, $B^*=C_\mathfrak{M}$ being densely defined implies that $\mathfrak{M}$ satisfies the conditions given in \eqref{eq:assumptions}, which be Lemma \ref{lemma:adjeq} implies that $C_\mathfrak{M}=A_\mathfrak{M}^*$. Also, since $\mathfrak{M}$ is closed with respect to the graph norm $\|\cdot\|_{\Gamma(A^*)}$, we have by Lemma \ref{lemma:closable} that $A_\mathfrak{M}$ is closed. Finally, since $B^*=A_\mathfrak{M}^*$ and $B$ as well as $A_\mathfrak{M}$ are closed, we get that $B\equiv A_\mathfrak{M}$, i.e. any closed extension $B$ of $A$ is of the form $B=A_\mathfrak{M}$, where $\mathfrak{M}$ is a subspace of $\mathcal{D}(A^*)$ that is closed with respect to the graph norm $\|\cdot\|_{\Gamma(A^*)}$ and satisfies the conditions given by \eqref{eq:assumptions}. Finally, let us argue that the mapping $\mathfrak{M}\mapsto A_\mathfrak{M}$ is injective. But $A_\mathfrak{M}=A_{\mathfrak{M}'}$ would imply that $A_\mathfrak{M}^*=A_{\mathfrak{M}'}^*$. However as argued above, we have that 
$$\mathfrak{M}=\{\phi\in\mathcal{D}(A^*):(\phi,A^*\phi)\in\Gamma(A^*)\ominus\Gamma(A_{\mathfrak{M}}^*)\}=\{\phi\in\mathcal{D}(A^*):(\phi,A^*\phi)\in\Gamma(A^*)\ominus\Gamma(A_{\mathfrak{M}'}^*)\}=\mathfrak{M}'\:,$$
showing that $\mathfrak{M}\mapsto A_\mathfrak{M}$ is injective.
This finishes the proof.  
\end{proof}
Likewise, since each closed extension of $A$ is the adjoint of a closed and densely defined restriction of $A^*$, we have also established a one-to-one correspondence between all densely defined and closed restrictions of $A^*$:

\begin{corollary} \label{coro:denserestrictions} Let $A$ be a densely defined and closable operator. Then, there is a one-to-one correspondence between all densely defined and closed restrictions of $A^*$ and all subspaces $\mathfrak{M}\subset\mathcal{D}(A^*)$ that are closed with respect to $\|\cdot\|_{\Gamma(A  ^*)}$ and that satisfy Condition \eqref{eq:assumptions}. These densely defined and closed restrictions are given by the operators $C_\mathfrak{M}$ as defined in Definition \ref{def:restr}.
\end{corollary}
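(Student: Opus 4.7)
The plan is to derive this statement from Theorem \ref{thm:corresp} by taking adjoints. First, since $A$ is densely defined and closable, its closure $\overline{A}$ is densely defined and closed with $\overline{A}^{*}=A^{*}$, so densely defined closed restrictions of $A^{*}$ coincide with those of $\overline{A}^{*}$, and I may assume without loss of generality that $A$ is itself closed. This puts us in the setting of Theorem \ref{thm:corresp}.

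Next I would install an adjoint bijection between closed extensions of $A$ and densely defined closed restrictions of $A^{*}$. Given a closed extension $B\supseteq A$, the adjoint $B^{*}$ is automatically closed and satisfies $B^{*}\subseteq A^{*}$; moreover, because $B$ is closed (hence closable), Proposition \ref{prop:weidmann} forces $B^{*}$ to be densely defined. Conversely, if $R$ is a densely defined closed restriction of $A^{*}$, then $R^{*}$ is closed and $R\subseteq A^{*}$ gives $R^{*}\supseteq A^{**}=A$. Since both $B$ and $R$ are closed, $B^{**}=B$ and $R^{**}=R$, so $B\mapsto B^{*}$ and $R\mapsto R^{*}$ are mutually inverse bijections between the two classes.

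Combining this adjoint bijection with Theorem \ref{thm:corresp} identifies the parameter space: a densely defined closed restriction $R$ of $A^{*}$ corresponds to the closed extension $R^{*}=A_{\mathfrak{M}}$ for a unique $\mathfrak{M}\subset\mathcal{D}(A^{*})$ that is closed in $\|\cdot\|_{\Gamma(A^{*})}$ and satisfies \eqref{eq:assumptions}. Lemma \ref{lemma:adjeq} then pins down $R=A_{\mathfrak{M}}^{*}=C_{\mathfrak{M}}$, and injectivity of $\mathfrak{M}\mapsto C_{\mathfrak{M}}$ is inherited from Theorem \ref{thm:corresp} by composition of bijections.

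No step here strikes me as a serious obstacle, as everything is already packaged in the earlier results; the corollary is essentially a repackaging of Theorem \ref{thm:corresp} via the closed-graph duality $B\leftrightarrow B^{*}$. The one bit worth double-checking is that the conditions in Theorem \ref{thm:corresp} (graph-norm closure plus \eqref{eq:assumptions}) are precisely the conditions making $C_{\mathfrak{M}}$ both closed and densely defined; this is guaranteed by Theorem \ref{thm:denselydefined} on the dense-domain side and by the identification $\overline{A_{\mathfrak{M}}}=A_{\overline{\mathfrak{M}}^{\|\cdot\|_{\Gamma(A^{*})}}}$ from Lemma \ref{lemma:closable} on the closedness side.
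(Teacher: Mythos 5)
Your proposal is correct and follows essentially the same route as the paper: both pass through the adjoint correspondence $C\leftrightarrow C^{*}$, invoke Theorem \ref{thm:corresp} to identify $C^{*}=A_{\mathfrak{M}}$ for a unique admissible $\mathfrak{M}$, and then use Lemma \ref{lemma:adjeq} together with $C^{**}=C$ to conclude $C=C_{\mathfrak{M}}$. Your explicit reduction to the case of closed $A$ via $\overline{A}^{*}=A^{*}$ is a small but welcome clarification that the paper leaves implicit.
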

\begin{proof}
By Proposition \ref{prop:weidmann}, since $A$ is densely defined and closable, we know that $A^*$ is densely defined. Now, let $C$ be any closed and densely defined restriction of $A^*$. This immediately implies that $A\subset C^*$ and by Theorem \ref{thm:corresp}, there exists a unique subspace $\mathfrak{M}\subset\mathcal{D}(A^*)$, which is closed with respect to $\|\cdot\|_{\Gamma(^*)}$ and which satisfies Condition \eqref{eq:assumptions} such that $C^*=A_\mathfrak{M}$. Since $C$ is closed, we get $A_{\mathfrak{M}}^*=C^{**}=C$ and Lemma \ref{lemma:adjeq} then implies that $C=C_\mathfrak{M}$, which shows the corollary.
\end{proof}
\section{Gel'fand Triples} \label{sec:gelfand}
The purpose of this section is to add an additional and in some sense more natural point of view to the results we have obtained so far with the help of Gel'fand triples. Our construction is motivated by \cite{CrandallPhillips}, where we adjust a few necessary details as suitable for our later needs. 

To this end, for any densely defined and closed operator $A^*$, let the Hilbert space $\mathcal{H}_{+1}$ be the linear space $\mathcal{D}(A^*)$ equipped with the inner product $\langle f,g\rangle_{+1}:=\langle f,g\rangle+\langle A^*f,A^*g\rangle$, which continuously embeds into $\mathcal{H}$. It can be shown that $\mathcal{D}(A^*)=\mathcal{D}((\idty+AA^*)^{1/2})$ and
\begin{equation} \label{eq:sqrtidty}
\langle f,g\rangle_{+1}=\langle f,g\rangle+\langle A^*f,A^*g\rangle=\langle (\idty+AA^*)^{1/2}f,(\idty+AA^*)^{1/2}g\rangle
\end{equation}
for any $f,g\in\mathcal{H}_{+1}$.

Let us also introduce the Hilbert space $\mathcal{H}_{-1}$ as the closure of $\mathcal{H}$ with respect to the norm $\|\cdot\|_{-1}$, which is induced by the inner product $\langle f,g\rangle_{-1}:=\langle (\idty+AA^*)^{-1/2}f,(\idty+AA^*)^{-1/2}g\rangle$ for any $f,g\in\mathcal{H}$. By a well-known theorem of J.\ von Neumann \cite[Satz 8.22 b]{Weid1}, $AA^*$ is a non-negative selfadjoint operator, which implies in particular that $(\idty+AA^*)^{1/2}$ is boundedly invertible. Moreover note that $\|f\|_{-1}\leq \|f\|$ for any $f\in\mathcal{H}$. Also, note that by virtue of the same theorem, $\mathcal{D}(AA^*)$ is a core of $(\idty+AA^*)^{1/2}$.

The space $\mathcal{H}_{-1}$ can now be identified with $\mathcal{H}_{+1}^*$ in the following sense:

For any bounded linear functional $\ell\in\mathcal{H}_{+1}^*$, there exists by Riesz' representation theorem a unique $\phi\in\mathcal{H}_{+1}=\mathcal{D}((\idty+AA^*)^{1/2})$ such that for any $f\in\mathcal{H}_{+1}$:
\begin{equation*}
\ell(f)=\langle \phi,f\rangle_{+1}=\langle (\idty+AA^*)^{1/2}\phi,(\idty+AA^*)^{1/2}f\rangle\:.
\end{equation*}
Now, since $\mathcal{D}(AA^*)$ is a core of $(\idty+AA^*)^{1/2}$, there exists a sequence $\{\phi_n\}_{n=1}^\infty\subset\mathcal{D}(AA^*)$ such that $\phi_n\rightarrow \phi$ and $(\idty+AA^*)^{1/2}\phi_n\rightarrow(\idty+AA^*)^{1/2}\phi$.
We then get that for any $f\in\mathcal{D}((\idty+AA^*)^{1/2})$:
\begin{align*} 
\ell(f)&=\langle \phi,f\rangle_{+1}=\langle (\idty+AA^*)^{1/2}\phi,(\idty+AA^*)^{1/2}f\rangle=\lim_{n\rightarrow\infty}\langle (\idty+AA^*)^{1/2}\phi_n,(\idty+AA^*)^{1/2}f\rangle\notag\\&=\lim_{n\rightarrow\infty}\langle (\idty+AA^*)\phi_n,f\rangle=\lim_{n\rightarrow\infty}\langle \ell_n,f\rangle\:.
\end{align*}
where we have defined $\ell_n:=(\idty+AA^*)\phi_n$ for any $n\in\N$.
This means that for any bounded linear functional $\ell\in\mathcal{H}_{+1}^*$ there exists a sequence $\{\ell_n\}_{n=1}^\infty$ of elements in $\mathcal{H}$ which is convergent in $\|\cdot\|_{-1}$--norm such that for any $f\in\mathcal{H}_{+1}$ we have $\ell(f)=\lim_{n\rightarrow\infty}\langle \ell_n,f\rangle$
and whose $\|\cdot\|_{-1}$--limit we then identify with $\ell\in\mathcal{H}_{+1}^*$. Conversely, note also that for any sequence $\{w_n\}_{n=1}^\infty\subset\mathcal{H}$ which is convergent in $\|\cdot\|_{-1}$--norm we get for any $f\in\mathcal{H}_{+1}$:
\begin{equation*}
\left|\lim_{n\rightarrow\infty}\langle w_n,f\rangle\right|=\left|\lim_{n\rightarrow\infty}\langle (\idty+AA^*)^{-1/2}w_n,(\idty+AA^*)^{1/2}f\rangle\right|=|\langle \widehat{w},(\idty+AA^*)^{1/2}f\rangle|\leq \|\widehat{w}\|\|f\|_{+1}\:,
\end{equation*}
where $\widehat{w}\in\mathcal{H}$ is the limit of $\{(\idty+AA^*)^{-1/2}w_n\}_{n=1}^\infty$ in $\mathcal{H}$, which exists since $\{w_n\}_{n=1}^\infty$ is convergent in $\mathcal{H}_{-1}$. Hence, any such sequence $\{w_n\}_{n=1}^\infty$ defines a bounded linear functional on $\mathcal{H}_{+1}$ via $f\mapsto \lim_{n\rightarrow\infty}\langle w_n,f\rangle$.
In this sense, we have explicitly constructed the Gel'fand triple: $\mathcal{H}_{+1}\subset\mathcal{H}\subset\mathcal{H}_{-1}$.

In view of this framework, Theorem \ref{thm:denselydefined} can then be reformulated as follows:
\begin{theorem} Let $A$ be a densely defined and closable operator and let $\mathcal{L}\subset\mathcal{H}_{-1}$. Then, the operator $A_\mathcal{L}'$ given by
\begin{equation} \label{eq:al}
A_\mathcal{L}':\quad\mathcal{D}(A_\mathcal{L}')=\{f\in\mathcal{D}(A^*): \forall \ell\in \mathcal{L}: \ell(f)=0\},\qquad A_\mathcal{L}'=A^*\upharpoonright_{\mathcal{D}(A_\mathcal{L}')}
\end{equation}
 is closed. Moreover, it is densely defined if and only if
\begin{equation} \label{eq:-1cond}
\overline{\mathcal{L}}^{\|\cdot\|_{-1}}\cap\mathcal{H}=\{0\}\:.
\end{equation} \label{thm:densefctl}
\end{theorem}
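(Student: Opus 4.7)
The plan is to reduce Theorem \ref{thm:densefctl} to Theorem \ref{thm:denselydefined} by using the isometric identification $\mathcal{H}_{-1}\cong \mathcal{H}_{+1}^*$ established above. For each $\ell\in\mathcal{L}$, I would apply the Riesz representation theorem in the Hilbert space $\mathcal{H}_{+1}$ to produce a unique $\phi_\ell\in\mathcal{D}(A^*)$ satisfying $\ell(f)=\langle\phi_\ell,f\rangle_{+1}=\langle\phi_\ell,f\rangle+\langle A^*\phi_\ell,A^*f\rangle$ for all $f\in\mathcal{D}(A^*)$. Setting $\mathfrak{M}:=\spann\{\phi_\ell:\ell\in\mathcal{L}\}\subset\mathcal{D}(A^*)$, the defining condition ``$\ell(f)=0$ for all $\ell\in\mathcal{L}$'' rewrites as ``$\langle\phi,f\rangle+\langle A^*\phi,A^*f\rangle=0$ for all $\phi\in\mathfrak{M}$'', so that $A_\mathcal{L}'=C_\mathfrak{M}$ in the sense of Definition \ref{def:restr}. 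Closedness of $A_\mathcal{L}'$ is then immediate from Theorem \ref{thm:denselydefined}.

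For the density equivalence, the isometry $\phi\mapsto\langle\phi,\cdot\rangle_{+1}$ from $\mathcal{H}_{+1}$ onto $\mathcal{H}_{+1}^*\cong\mathcal{H}_{-1}$ identifies the $\|\cdot\|_{-1}$-closure of $\mathcal{L}$ with the $\|\cdot\|_{\Gamma(A^*)}$-closure of $\mathfrak{M}$. The remaining step is to characterise when $\ell_\phi\in\overline{\mathcal{L}}^{\|\cdot\|_{-1}}$ actually belongs to $\mathcal{H}$. If $\ell_\phi(f)=\langle h,f\rangle$ for some $h\in\mathcal{H}$ and all $f\in\mathcal{D}(A^*)$, then $\langle A^*\phi,A^*f\rangle=\langle h-\phi,f\rangle$, which forces $A^*\phi\in\mathcal{D}(A^{**})=\mathcal{D}(A)$ and $h=(\idty+AA^*)\phi$; conversely, when $A^*\phi\in\mathcal{D}(A)$ the functional $\ell_\phi$ is represented by the element $(\idty+AA^*)\phi\in\mathcal{H}$.

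Combining these, \eqref{eq:-1cond} translates to the assertion that every $\phi\in\overline{\mathfrak{M}}^{\|\cdot\|_{\Gamma(A^*)}}$ with $A^*\phi\in\mathcal{D}(A)$ satisfies $(\idty+AA^*)\phi=0$, which by the identity $\|\phi\|^2+\|A^*\phi\|^2=\langle(\idty+AA^*)\phi,\phi\rangle$ (already used in the proof of Theorem \ref{thm:denselydefined}) forces $\phi=0$. I would then verify that this statement is equivalent to the conjunction of the two clauses of \eqref{eq:assumptions}: one direction notes that $\phi\in\ker A^*\cap\overline{\mathfrak{M}}^{\|\cdot\|_{\Gamma(A^*)}}$ gives $A^*\phi=0\in\mathcal{D}(A)$, hence $\phi=0$, and any $\phi\in\overline{\mathfrak{M}}^{\|\cdot\|_{\Gamma(A^*)}}$ with $A^*\phi\in\mathcal{D}(A)$ has $\phi=0$ and therefore $A^*\phi=0$; the reverse direction uses clause (b) of \eqref{eq:assumptions} to conclude $A^*\phi=0$ and then clause (a) to conclude $\phi=0$. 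The dense-definition part then follows from Theorem \ref{thm:denselydefined}. The main technical obstacle is the careful bookkeeping of the Gel'fand-triple identification, ensuring that closures in $\|\cdot\|_{-1}$ correspond to closures in $\|\cdot\|_{\Gamma(A^*)}$ and that membership in $\mathcal{H}$ is detected precisely by the condition $A^*\phi\in\mathcal{D}(A)$.
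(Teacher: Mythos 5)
Your proposal is correct and follows essentially the same route as the paper: apply the Riesz representation theorem in $\mathcal{H}_{+1}$ to obtain $\mathfrak{M}$ with $A_{\mathcal{L}}'=C_{\mathfrak{M}}$, get closedness from Theorem \ref{thm:denselydefined}, and for density use the key fact that $\ell_{\phi}$ lies in $\mathcal{H}$ precisely when $A^{*}\phi\in\mathcal{D}(A)$, with representative $(\idty+AA^{*})\phi$. The only difference is organizational: you package the density criterion as a single equivalence between \eqref{eq:-1cond} and \eqref{eq:assumptions} via the isometric identification of $\overline{\mathcal{L}}^{\|\cdot\|_{-1}}$ with $\overline{\mathfrak{M}}^{\|\cdot\|_{\Gamma(A^{*})}}$, whereas the paper proves the two implications separately by sequence arguments; the content is the same (note that both arguments implicitly use that $\mathcal{L}$ is a linear subspace, so that its closure agrees with that of its span).
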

\begin{proof} By Riesz' representation theorem, for any $\ell\in\mathcal{L}$, there exists a unique $\phi_\ell\in\mathcal{D}(A^*)=\mathcal{H}_{+1}$ such that 
\begin{equation*} \label{eq:riesz}
\forall f\in\mathcal{D}(A^*):\quad\ell(f)=\langle \phi_\ell,f\rangle+\langle A^*\phi_\ell,A^*f\rangle\:.
\end{equation*}
Defining the set $$\mathfrak{M}:=\{\phi\in\mathcal{D}(A^*):\exists \ell\in\mathcal{L} \:\mbox{s.t.}\:\forall f\in\mathcal{D}(A^*): \ell(f)=\langle\phi,f\rangle+\langle A^*\phi,A^*f\rangle\}\:,$$ we see that $A_\mathcal{L}'=C_\mathfrak{M}$, where $C_\mathfrak{M}$ was defined in \ref{def:restr}. Theorem \ref{thm:denselydefined} then immediately implies that $A_\mathcal{L}'$ is closed. 

Assume now that there exists a $0\neq\ell\in\overline{\mathcal{L}}^{\|\cdot\|_{-1}}\cap\mathcal{H}$. Then, there exists a sequence $\{\ell_n\}_{n=1}^\infty\subset\mathcal{L}$ such that for any $f\in\mathcal{H}_{+1}$ we have $\ell(f)=\lim_{n\rightarrow\infty}\ell_n(f)$. For any $f\in\mathcal{D}(A_\mathcal{L}')$ this means in particular that
\begin{equation*}
\langle \ell,f\rangle=\ell(f)=\lim_{n\rightarrow\infty}\ell_n(f)=\lim_{n\rightarrow\infty}0=0\:,
\end{equation*}
implying that $\mathcal{D}(A'_{\mathcal{L}})\perp\spann\{\ell\}$, which therefore is not dense.  

Now, assume that $A_\mathcal{L}'=C_\mathfrak{M}$ is not densely defined. By Theorem \ref{thm:denselydefined}, this means that there exists a $0\neq \phi\in\overline{\mathfrak{M}}^{\|\cdot\|_{\Gamma(A^*}}$ such that $A^*\phi\in\mathcal{D}(A)$. Let us now show that the bounded linear functional on $\mathcal{H}_{+1}$ given by $\mathcal{H}\ni(\idty+AA^*)\phi: f\mapsto \langle \phi,f\rangle+\langle A^*\phi,A^*f\rangle=\langle(\idty+ AA^*)\phi,f\rangle$, is an element of $\overline{\mathcal{L}}^{\|\cdot\|_{-1}}$ proving that $\overline{\mathcal{L}}^{\|\cdot\|_{-1}}\cap\mathcal{H}\neq \{0\}$. To this end, let $\{\phi_n\}_{n=1}^\infty\subset \mathfrak{M}$ be such that $\phi_n\rightarrow\phi$ and $A^*\phi_n\rightarrow A^*\phi$. By definition of $\mathfrak{M}$, the linear functionals $\ell_n:\: f\mapsto \langle \phi_n,f\rangle+\langle A^*\phi_n,A^*f\rangle$ are all elements of $\mathcal{L}$. It is now not hard to see that $\ell_n\rightarrow\ell$ with respect to $\|\cdot\|_{-1}$ which thus finishes the proof.
   
\end{proof}
\begin{example} \normalfont Let $\mathcal{H}=L^2(\R^2)$, $s\geq 1$ and the selfadjoint operator $A=A^*$ be given by
\begin{equation*}
A:\quad\mathcal{D}(A)=H^s(\R^2),\qquad f\mapsto|\nabla|^s f\:.
\end{equation*}
Here and in the following $H^s$ denotes the Sobolev space of order $s$.
We can identify $\mathcal{H}_{+1}=H^s(\R^2)$ and $\mathcal{H}_{-1}=H^{-s}(\R^2)$. Let $\{\eta_1,\eta_2,\dots,\eta_n\}$ be a linearly independent set of measurable functions such that for any $i=\{1,2,\dots,n\}$, there is a $q_i\in(1,2)$ such that $\eta_i\in L^{q_i}(\R^2)\setminus L^2(\R^2)$. Moreover, let $\{g_1,g_2,\dots, g_n\}$ be a collection (of not necessarily linear independent) elements of $L^2(\R^2)$. 

By Sobolev embedding (e.g.\ \cite[Thm.\ 8.5 (ii)]{LiebLoss}), the map $f\mapsto \int_{\R^2}\overline{(\eta_i-g_i)}f\:dx$ is a bounded linear functional on $H^s(\R^2)$, since
\begin{equation*}
\left|\int_{\R^2}\overline{(\eta_i-g_i)}f\:dx\right|\leq \|\eta_i\|_{q_i}\|f\|_{\frac{q_i}{q_i-1}}+\|g_i\|_2\|f\|_2\leq C_i\sqrt{\|f\|_2^2+\|\nabla f\|_2^2}=C_i\|f\|_{H^1}\leq C_i\|f\|_{H^s}\:,
\end{equation*}
where the constant $C_i$ only depends on the fixed parameters $q_i, \|\eta_i\|_{q_i}$ and $\|g_i\|_2$. Defining $\mathcal{L}:=\spann\{\eta_1-g_1,\dots, \eta_n-g_n\}$, firstly note that $\mathcal{L}=\overline{\mathcal{L}}^{\|\cdot\|_{-1}}$ since $\mathcal{L}$ is finite-dimensional. Since we made the assumption $\eta_i\notin L^2(\R^2)$, we get that $\overline{\mathcal{L}}^{\|\cdot\|_{-1}}\cap\mathcal{H}=\mathcal{L}\cap\mathcal{H}=\{0\}$. Hence, by Theorem \ref{thm:densefctl}, the set
\begin{equation*}
\mathfrak{D}:=\left\{f\in H^s(\R^2) : \forall i\in\{1,2,\dots,n\}: \int_{\R^2}\overline{\eta_i}\cdot f\:dx=\int_{\R^2}\overline{g_i}\cdot f\: dx\right\}
\end{equation*}
is dense in $L^2(\R^2)$ and $A\upharpoonright_{\mathfrak{D}}$ is a closed and densely defined restriction of $A$.

\end{example}

Let us finish this section by giving a restatement of Corollary \ref{coro:denserestrictions}. We will, however, omit the proof as it follows straightforwardly.
\begin{corollary} \label{coro:gelfanddenserestr}
Let $A^*$ be a closed and densely defined on a Hilbert space $\mathcal{H}$. Then there is a one-to-one correspondence between all densely defined and closed restrictions of $A^*$ and all subspaces $\mathcal{L}\subset\mathcal{H}_{-1}$ that are closed with respect to $\|\cdot\|_{-1}$ and that satisfy $\mathcal{L}\cap\mathcal{H}=\{0\}$. This correspondence is given via the operators $A_\mathcal{L}'$ as defined in \eqref{eq:al}.
\end{corollary}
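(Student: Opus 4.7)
The plan is to transport the correspondence of Corollary \ref{coro:denserestrictions} through the duality provided by the Gel'fand triple. Define the (antilinear) map
\[
T:\mathcal{H}_{+1}\to\mathcal{H}_{-1},\qquad \phi\mapsto\ell_\phi,\qquad \ell_\phi(f):=\langle\phi,f\rangle+\langle A^*\phi,A^*f\rangle,
\]
which sends $\phi\in\mathcal{D}(A^*)$ to the functional on $\mathcal{D}(A^*)$ determined by the $\|\cdot\|_{+1}$-inner product with $\phi$. By Riesz' representation theorem and the isometric identification $\mathcal{H}_{-1}\cong\mathcal{H}_{+1}^{*}$ constructed just before Theorem \ref{thm:densefctl}, $T$ is an antilinear bijective isometry from $(\mathcal{H}_{+1},\|\cdot\|_{+1})$ onto $(\mathcal{H}_{-1},\|\cdot\|_{-1})$; consequently it restricts to a bijection between subspaces $\mathfrak{M}\subset\mathcal{D}(A^*)$ that are closed in $\|\cdot\|_{\Gamma(A^*)}$ and subspaces $\mathcal{L}\subset\mathcal{H}_{-1}$ that are closed in $\|\cdot\|_{-1}$, via $\mathcal{L}=T(\mathfrak{M})$.

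Next I would match the side conditions. An element $\ell_\phi\in T(\mathfrak{M})$ belongs to $\mathcal{H}\subset\mathcal{H}_{-1}$ precisely when there exists $w\in\mathcal{H}$ with $\langle w,f\rangle=\langle\phi,f\rangle+\langle A^*\phi,A^*f\rangle$ for every $f\in\mathcal{D}(A^*)$; since $A^{**}=A$, this forces $A^*\phi\in\mathcal{D}(A)$ and $w=(\idty+AA^*)\phi$. The computation $\|\phi\|^2=-\langle\phi,AA^*\phi\rangle=-\|A^*\phi\|^2$ already performed in the proof of Theorem \ref{thm:denselydefined} then shows that $(\idty+AA^*)\phi=0$ forces $\phi=0$. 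Hence, for a $\|\cdot\|_{\Gamma(A^*)}$-closed $\mathfrak{M}$, the condition $T(\mathfrak{M})\cap\mathcal{H}=\{0\}$ is equivalent to saying that every $\phi\in\mathfrak{M}$ with $A^*\phi\in\mathcal{D}(A)$ is zero, which is precisely the combined content of the two clauses of Condition \eqref{eq:assumptions}.

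Finally I would observe that for $\mathcal{L}=T(\mathfrak{M})$ the operator $A_\mathcal{L}'$ from \eqref{eq:al} coincides with $C_\mathfrak{M}$ of Definition \ref{def:restr}, since the defining requirement $\ell(f)=0$ for all $\ell\in\mathcal{L}$ unfolds via Riesz into $\langle\phi,f\rangle+\langle A^*\phi,A^*f\rangle=0$ for all $\phi\in\mathfrak{M}$; this identification was already the key step in the proof of Theorem \ref{thm:densefctl}. Combining this with Corollary \ref{coro:denserestrictions} yields both surjectivity of the map $\mathcal{L}\mapsto A_\mathcal{L}'$ and, via the injectivity of $\mathfrak{M}\mapsto C_\mathfrak{M}$ together with the bijectivity of $T$, also its injectivity. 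The main subtle point is the isometry statement for $T$ -- that $\|T\phi\|_{-1}=\|\phi\|_{+1}$ -- but this reduces via a core of $(\idty+AA^*)^{1/2}$ to the identity $\|(\idty+AA^*)^{-1/2}(\idty+AA^*)\phi\|=\|(\idty+AA^*)^{1/2}\phi\|$, which is exactly the content of the Gel'fand triple construction carried out just above; everything else is routine bookkeeping.
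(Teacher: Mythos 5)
Your argument is correct, and it supplies exactly the proof the paper omits: transporting Corollary \ref{coro:denserestrictions} through the antilinear isometric identification $\mathcal{H}_{+1}\cong\mathcal{H}_{-1}$ built before Theorem \ref{thm:densefctl}, checking that graph-norm-closed $\mathfrak{M}$ satisfying \eqref{eq:assumptions} correspond precisely to $\|\cdot\|_{-1}$-closed $\mathcal{L}$ with $\mathcal{L}\cap\mathcal{H}=\{0\}$, and that $A_{T(\mathfrak{M})}'=C_{\mathfrak{M}}$. The equivalence you note between ``$\phi\in\mathfrak{M}$ and $A^*\phi\in\mathcal{D}(A)$ imply $\phi=0$'' and the two clauses of \eqref{eq:assumptions} for closed $\mathfrak{M}$ is the one nontrivial bookkeeping step, and you handle it correctly.
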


\section{Convergence} \label{sec:convergence}
Given a sequence $\{A_n\}_{n=1}^\infty$ of closed operators on a Hilbert space $\mathcal{H}$, let us now recall Kato's notion of generalized convergence. Since we are only dealing with closed operators on Hilbert spaces, we will not define generalized convergence in its full generality which comprises operators between Banach spaces (cf.\ \cite[Chapt.\ IV,\S 2]{Kato} for the general definition). Rather, we will give a more special definition of generalized convergence that by {\cite[p.\ 198, footnote 1]{Kato}} is equivalent to the general definition in the Hilbert space case.
\begin{definition}\normalfont
Let $\{A_n\}_{n=1}^\infty$ and $A$ be closed operators on a Hilbert space $\mathcal{H}$. We say $\{A_n\}_{n=1}^\infty$ {\bf{converges in the generalized sense}} to a closed operator $A$, which we denote by ``$A_n\overset{\text{Kato}}{\longrightarrow} A$" if and only if the orthogonal projections onto the graphs $P(\Gamma(A_n))$ converge in norm to $P(\Gamma(A))$, i.e. if and only if
\begin{equation*}
\|P(\Gamma(A_n))-P(\Gamma(A))\|\overset{n\rightarrow\infty}{\longrightarrow}0\:.
\end{equation*}
\end{definition}
\begin{theorem} Let $A$ be a closed and densely defined operator on a complex Hilbert space $\mathcal{H}$. Let $\mathfrak{M}\subset\mathcal{D}(A^*)$ be a subspace and let $\{\mathfrak{M}_n\}_{n=1}^\infty$ be a sequence of subspaces of $\mathcal{D}(A^*)$. 
\\\\
i) Let the operators $C_{\mathfrak{M}_n}$ and $C_\mathfrak{M}$ be defined as in Definition \eqref{def:restr}. Moreover, for any linear space  $\mathfrak{N}\subset\mathcal{D}(A^*)$ define the set $$\Gamma_\mathfrak{N}=\{(\phi,A^*\phi):\phi\in\mathfrak{N}\}\:,$$
where $\Gamma_\mathfrak{N}\subset\mathcal{H}\oplus\mathcal{H}$.
Then $$C_{\mathfrak{M}_n}\overset{\text{Kato}}{\longrightarrow} C_\mathfrak{M}\quad\text{if and only if} \quad \lim_{n\rightarrow 0}\|P(\Gamma_{\mathfrak{M}_n})-P(\Gamma_\mathfrak{M})\|=0\:.$$ 
\\\\
ii) Assume in addition that the spaces $\{\mathfrak{M}_n\}_{n=1}^\infty$ and $\mathfrak{M}$ are closed with respect to $\|\cdot\|_{\Gamma(A^*)}$ and that they satisfy Condition \eqref{eq:assumptions}. Moreover, let the operators $A_{\mathfrak{M}_n}$ be defined as in Definition \ref{def:extension}. Then
$$A_{\mathfrak{M}_n}\overset{\text{Kato}}{\longrightarrow} A_\mathfrak{M}\quad\text{if and only if} \quad \lim_{n\rightarrow 0}\|P(\Gamma_{\mathfrak{M}_n})-P(\Gamma_\mathfrak{M})\|=0\:.$$
\label{thm:convergence}
\end{theorem}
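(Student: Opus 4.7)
My plan is to prove part (i) directly from the orthogonality characterization \eqref{eq:perpi}, and then reduce part (ii) to part (i) using the standard adjoint-invariance of Kato convergence in the Hilbert space setting.

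For part (i), I would start from \eqref{eq:perpi}, rewriting it as $\Gamma(C_\mathfrak{M})=\Gamma(A^*)\cap\overline{\Gamma_\mathfrak{M}}^{\perp}$ (orthogonal complements are insensitive to closures). Since $\Gamma_\mathfrak{M}\subset\Gamma(A^*)$ and $\Gamma(A^*)$ is closed in $\mathcal{H}\oplus\mathcal{H}$ (because $A^*$ is closed), also $\overline{\Gamma_\mathfrak{M}}\subset\Gamma(A^*)$. This yields the orthogonal decomposition
\[ \Gamma(A^*)=\overline{\Gamma_\mathfrak{M}}\oplus'\Gamma(C_\mathfrak{M}), \]
which, since $P(\overline{\Gamma_\mathfrak{M}})=P(\Gamma_\mathfrak{M})$, translates to the projection identity $P(\Gamma(C_\mathfrak{M}))=P(\Gamma(A^*))-P(\Gamma_\mathfrak{M})$. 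Applying this to $\mathfrak{M}$ and to $\mathfrak{M}_n$ and subtracting gives
\[ P(\Gamma(C_{\mathfrak{M}_n}))-P(\Gamma(C_\mathfrak{M}))=P(\Gamma_\mathfrak{M})-P(\Gamma_{\mathfrak{M}_n}), \]
so the equivalence of the two norm convergences is immediate.

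For part (ii), the additional hypotheses let me invoke Lemma \ref{lemma:closable} (so that $A_{\mathfrak{M}_n}$ and $A_\mathfrak{M}$ are closed, and densely defined since they extend $A$) and Lemma \ref{lemma:adjeq} (so that $A_{\mathfrak{M}_n}^*=C_{\mathfrak{M}_n}$ and $A_\mathfrak{M}^*=C_\mathfrak{M}$). The classical reduction is via the unitary $V:\mathcal{H}\oplus\mathcal{H}\to\mathcal{H}\oplus\mathcal{H}$, $V(f,g)=(-g,f)$, for which $\Gamma(B^*)=(V\Gamma(B))^{\perp}$ for every densely defined $B$. Consequently, for closed $B$,
\[ P(\Gamma(B^*))=\idty-VP(\Gamma(B))V^{*}, \]
and this isometric relation yields
\[ \|P(\Gamma(C_{\mathfrak{M}_n}))-P(\Gamma(C_\mathfrak{M}))\|=\|P(\Gamma(A_{\mathfrak{M}_n}))-P(\Gamma(A_\mathfrak{M}))\|. \]
Hence $A_{\mathfrak{M}_n}\overset{\text{Kato}}{\longrightarrow} A_\mathfrak{M}$ is equivalent to $C_{\mathfrak{M}_n}\overset{\text{Kato}}{\longrightarrow} C_\mathfrak{M}$, and part (i) finishes the argument.

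The only real substantive step is the orthogonal decomposition underlying part (i); once that identity is in place, everything else reduces to routine projection algebra together with the well-known adjoint-invariance of Kato convergence used in part (ii). The one subtlety worth noting is that $\mathfrak{M}_n$ and $\mathfrak{M}$ are not assumed closed in graph norm for part (i), so I must pass to $\overline{\Gamma_\mathfrak{M}}$ when using the orthogonal decomposition, but this does not affect $P(\Gamma_\mathfrak{M})$ and hence causes no difficulty.
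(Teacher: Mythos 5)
Your proof is correct and follows essentially the same route as the paper: part (i) via the projection identity $P(\Gamma(C_\mathfrak{N}))=P(\Gamma(A^*))-P(\Gamma_\mathfrak{N})$ coming from \eqref{eq:perpi}, and part (ii) by reducing to part (i) through adjoints using Lemmas \ref{lemma:closable} and \ref{lemma:adjeq}. The only difference is that where the paper simply cites \cite[IV, Thm.\ 2.23 d)]{Kato} for the adjoint-invariance of generalized convergence, you supply its standard proof via the graph rotation $V(f,g)=(-g,f)$; this is a self-contained unpacking of the citation rather than a different argument.
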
,
\begin{proof} Observe that for any $\mathfrak{N}\subset\mathcal{D}(A^*)$ we have by \eqref{eq:perpi}
\begin{equation*}
P(\Gamma(C_\mathfrak{N}))=P(\Gamma(A^*))-P(\Gamma_\mathfrak{N})\:.
\end{equation*}
Thus, for any $n\in\N$, we get 
$$\|P(\Gamma(C_{\mathfrak{M}_n}))-P(\Gamma(C_{\mathfrak{M}}))\|=\|P(\Gamma_{\mathfrak{M}_n})-P(\Gamma_\mathfrak{M})\|\:,$$
from which Assertion i) immediately follows.

Next, let us show ii). By Lemma \ref{lemma:adjeq}, we know that $C_\mathfrak{M}=A_\mathfrak{M}^*$ and $C_{\mathfrak{M}_n}=A_{\mathfrak{M}_n}^*$ for any $n\in\N$. Moreover, since we assumed $\mathfrak{M}_n$ and $\mathfrak{M}$ to be closed with respect to $\|\cdot\|_{\Gamma(A^*)}$, we know by Lemma \ref{lemma:closable}, that $A_\mathfrak{M}$ and $\{A_{\mathfrak{M}_n}\}_{n=1}^\infty$ are closed operators. The result now follows from \cite[IV, Thm.\ 2.23, d)]{Kato}. 
\end{proof}
\begin{remark} \normalfont For the case that $\mathfrak{M}$ is one-dimensional, i.e.\ if $\mathfrak{M}=\spann\{\phi\}$, where $\|\phi\|_{\Gamma(A^*)}=1$, observe that $P(\Gamma_{\mathfrak{M}_n})$ converges to $P(\Gamma_\mathfrak{M})$ in norm if and only if for $n$ large enough we have that $\mathfrak{M}_n$ is one-dimensional and for each $n$, there exists a $\phi_n\in\mathfrak{M}_n$, where $\|\phi_n\|_{\Gamma(A^*)}=1$ such that $\|\phi_n-\phi\|_{\Gamma(A^*)}\overset{n\rightarrow\infty}{\longrightarrow}0\:.$ \label{rem:1-d}
\end{remark}
\begin{example} \normalfont \label{ex:converge}
Let $\mathcal{H}=L^2(\R)$ and for any $n\in\N$, consider the operator $C_n$ given by
\begin{align*}
C_n:\qquad\mathcal{D}(C_n)&=\left\{f\in H^1(\R): \sum_{j=0}^{n-1} f\left(\frac{j}{n}\right)=0\right\}\\
f&\mapsto if'\:.
\end{align*}
(Here and in the following, $f'$ denotes the weak derivative of $f$.)
We claim that $\{C_n\}_{n=1}^\infty$ is a sequence of densely defined operators that converges in the generalized sense to the non-densely defined operator $C_\infty$ given by
\begin{align*}
C_\infty:\qquad\mathcal{D}(C_\infty)&=\left\{f\in H^1(\R): \int_0^1 f(x)dx=0\right\}\\
f&\mapsto if'\:.
\end{align*}
To this end, let us firstly define the functions $\phi_{\lambda}(x):=\frac{1}{2}e^{-\left|x-\lambda\right|}$, where $\lambda\in\R$. Let $A=A^*$ be the selfadjoint momentum operator on the real axis:
\begin{align} \label{eq:momentum}
A:\qquad\mathcal{D}(A)=H^1(\R), \qquad f\mapsto if'\:.
\end{align}
It is not hard to see that for any $f\in\mathcal{D}(A)=H^1(\R)$ and for any $\lambda\in\R$, we get\footnote{Note that even though $A=A^*$, for the sake of clarity, we will keep the notation of Section \ref{sec:theory} in which the distinction between $A$ and $A^*$ was important.}
\begin{equation} \label{eq:pointcond}
\langle f,\phi_\lambda\rangle+\langle A^*f,A^*\phi_\lambda\rangle=\langle f,\phi\rangle+\langle f',\phi'\rangle=\overline{f(\lambda)}\:.
\end{equation}
Now, for any $n\in\N$, let us define the function $\psi_n(x):=\sum_{j=0}^{n-1}\phi_{\frac{j}{n}}(x)$ and the subspace $\mathfrak{M}_n:=\spann\{\psi_n\}$. Equation \eqref{eq:pointcond} then implies that $C_n=C_{\mathfrak{M}_n}$, where the operators $C_{\mathfrak{M}_n}$ are defined as in Definition \ref{def:restr}. 
Moreover, we have $\phi_{\lambda}(x)\in\mathcal{D}(A^*)=\mathcal{D}(A)$ as well as $A^*\phi_{\lambda}\notin\mathcal{D}(A)$ since $\phi_\lambda'(x)=-\frac{1}{2}\sgn(x-\lambda)e^{-|x-\lambda|}\notin H^1(\R)$, which immediately implies that $\psi_n\in\mathcal{D}(A^*)$ but $0\neq A^*\psi_n\notin\mathcal{D}(A)$. Theorem \ref{thm:denselydefined} therefore implies that the operators $C_n=C_{\mathfrak{M}_n}$ are densely defined.

In order to find $\psi_\infty\in\mathcal{D}(A^*)$ and $\mathfrak{M}_\infty:=\spann\{\psi_\infty\}$, such that $C_\infty=C_{\mathfrak{M}_\infty}$, we proceed with our analysis by applying the Fourier transform $\mathcal{F}$, where we use the convention
\begin{equation*}
\left(\mathcal{F}f\right)(k):=\frac{1}{\sqrt{2\pi}}\int_{-\infty}^\infty f(x)e^{-ikx}dx
\end{equation*}
for any $f\in L^1(\R)$. Then, the functions $\widehat{\phi}_\lambda(k):=(\mathcal{F}\phi_\lambda)(k)$ are given by
\begin{equation*}
\widehat{\phi}_\lambda(k)=\frac{1}{\sqrt{2\pi}}\frac{e^{-i\lambda k}}{1+k^2}\:.
\end{equation*}
Moreover, we have that $\widehat{\psi}_n(k)=(\mathcal{F}\psi_n)(k)$ is given by
\begin{equation*}
\widehat{\psi}_n(k)=\sum_{j=0}^{n-1}\widehat{\phi}_{\frac{j}{n}}(k)=\frac{1}{\sqrt{2\pi}}\sum_{j=0}^{n-1}\frac{e^{-i\frac{j}{n}k}}{1+k^2}=\frac{1}{\sqrt{2\pi}}\frac{(1-e^{-ik})}{(1-e^{-i\frac{k}{n}})(1+k^2)}\:,
\end{equation*}
from which we get in particular that 
\begin{equation*}
\|\widehat{\psi}_n\|^2_{\Gamma(\widehat{A}^*)}=\|\widehat{\psi}_n\|^2+\|k\widehat{\psi}_n\|^2=\frac{1}{2\pi}\int_{-\infty}^\infty\frac{1}{1+k^2}\cdot\frac{1-\cos(k)}{1-\cos\left(\frac{k}{n}\right)}dk\:,
\end{equation*}
where the diagonalized operator $\widehat{A}:=\mathcal{F}^*A\mathcal{F}$ is the selfadjoint maximal multiplication operator by the independent variable:
\begin{align*}
\widehat{A}:\qquad\mathcal{D}(\widehat{A})&=\left\{\widehat{f}\in L^2(\R): \int_{-\infty}^{\infty}k^2|\widehat{f}(k)|^2dk<\infty\right\}\\
\left(\widehat{A}\widehat{f}\right)(k)&=k\widehat{f}(k)\:.
\end{align*}
We now want to consider the normalized sequence $\{\widehat{\psi}_n/\|\widehat{\psi}_n\|_{\Gamma(\widehat{A}^*)}\}_{n=1}^\infty$ and find its limit $\widehat{\psi}_\infty$ in $\|\cdot\|_{\Gamma(\widehat{A}^*)}$-norm. By what has been said in Remark \ref{rem:1-d}, we then only need to apply inverse Fourier transform to find the function $\psi_\infty=\mathcal{F}^*\widehat{\psi}_\infty$, which then is the limit of the sequence $\{\psi_n/\|\psi_n\|_{\Gamma(A^*)}\}_{n=1}^\infty$ in $\|\cdot\|_{\Gamma(A^*)}$-norm. If we then show that $C_\infty=C_{\mathfrak{M}_\infty}$ this proves that $C_n\overset{\text{Kato}}{\longrightarrow}C_\infty$. 
To this end, observe that if $n\in\N$, then for almost every $k\in\R$,  we can estimate
\begin{equation*}
0\leq\frac{1-\cos(k)}{n^2(1-\cos\left(\frac{k}{n}\right))}\leq 1\:,
\end{equation*}
Thus, by dominated convergence we get
\begin{equation} \label{eq:normasympt}
\lim_{n\rightarrow\infty}\frac{1}{n^2}\|\widehat{\psi}_n\|_{\Gamma(\widehat{A}^*)}^2=\lim_{n\rightarrow\infty}\frac{1}{2\pi}\cdot\frac{1}{ n^2}\int_{-\infty}^\infty\frac{1}{1+k^2}\cdot\frac{1-\cos(k)}{1-\cos\left(\frac{k}{n}\right)}dk=\frac{1}{\pi}\int_{-\infty}^\infty\frac{1-\cos(k)}{k^2+k^4}dk=\frac{1}{e}\:,
\end{equation}
where the last integral can be computed using residual calculus. This shows that $\|\widehat{\psi}_n\|\sim e^{-1/2} n$ as $n\rightarrow\infty$. Thus, consider the normalized functions 
\begin{equation*}
\frac{\widehat{\psi}_n(k)}{\|\widehat{\psi}_n\|_{\Gamma(\widehat{A}^*)}}=\frac{1}{\sqrt{2\pi}}\frac{n}{\|\widehat{\psi}_n\|_{\Gamma(\widehat{A}^*)}}\left(\frac{1}{n}\sum_{j=0}^{n-1}e^{-i\frac{j}{n}k}\right)\frac{1}{1+k^2}\:.
\end{equation*}
We now claim that $\frac{\widehat{\psi}_n(k)}{\|\widehat{\psi}_n\|_{\Gamma(\widehat{A}^*)}}$ converges in $\|\cdot\|_{\Gamma(\widehat{A}^*)}$-norm to the function
\begin{equation*}
\widehat{\psi}_\infty(k)=\sqrt{\frac{e}{2\pi}}\cdot\frac{1}{1+k^2}\int_0^1 e^{-ijk}dj=\sqrt{\frac{e}{2\pi}}\cdot\frac{1}{1+k^2}\cdot\frac{1-e^{-ik}}{ik}\:.
\end{equation*}
Using dominated convergence again, this can be easily seen from the fact that by \eqref{eq:normasympt}, $\frac{n}{\|\widehat{\psi}_n\|_{\Gamma(\widehat{A}^*)}}$ converges to $\sqrt{e}$, while the Riemann--sums $\left(\frac{1}{n}\sum_{j=0}^{n-1}e^{-i\frac{j}{n}k}\right)$ converge to the integral $\int_0^1 e^{-ijk}dj$.
An application of the inverse Fourier transform then yields
\begin{equation*}
\psi_\infty(x):=\left(\mathcal{F}^*\widehat{\psi}_\infty\right)(x)=\sqrt{e}\left(\frac{\sgn(x-1)}{2}e^{-|x-1|}-\frac{\sgn(x)}{2}e^{-|x|}+\chi_{[0,1]}(x)\right)\:,
\end{equation*}   
where 
\begin{equation*}
\sgn(x):=\begin{cases} -1\quad&\text{if}\quad x<0\\ 1\quad&\text{if}\quad x\geq 0\end{cases}\quad\text{and}\quad\chi_{[0,1]}(x):=\begin{cases} 1\quad&\text{if}\quad x\in[0,1]\\ 0\quad&\text{else}\end{cases}\qquad\:.
\end{equation*} 
Observe that $\psi_\infty\in H^2(\R)=\mathcal{D}(AA^*)=\mathcal{D}(AA)$, which can be easiest seen from the fact that its Fourier transform $\widehat{\psi}_\infty$ is in the domain of $\widehat{A}\widehat{A}$ -- the maximal operator of multiplication by $k^2$:
\begin{equation*}
\int_{-\infty}^\infty |k^2\widehat{\psi}_\infty(k)|^2dk=\frac{e}{\pi}\int_{-\infty}^\infty\left(\frac{k}{1+k^2}\right)^2(1-\cos(k))\,dk\leq 2\int_{-\infty}^\infty\left(\frac{k}{1+k^2}\right)^2dk<\infty\:.
\end{equation*}
A calculation shows that
\begin{equation*}
\psi_{\infty}(x)+\left(AA^*\psi_\infty\right)(x)=\psi_\infty(x)-\psi_\infty''(x)=\sqrt{e}\cdot\chi_{[0,1]}(x)\:.
\end{equation*}
Hence, defining $\mathfrak{M}_\infty:=\spann\{\psi_\infty\}$, then implies that $\mathcal{D}(C_\infty)=\mathcal{D}(C_{\mathfrak{M}_\infty})$ and in particular that $C_n\overset{\text{Kato}}{\longrightarrow}C_\infty$. Moreover, even though it is obvious, note that Theorem \ref{thm:denselydefined} and the fact that $\psi_\infty\in H^2(\R)$ imply that $C_{\mathfrak{M}_\infty}=C_\infty$ cannot be densely defined. 
\end{example}
\section{Closable and non-closable extensions} \label{sec:clonoclo}
As in the previous section, let $A$ be the selfadjoint momentum operator on the real line (cf. \ \eqref{eq:momentum}). We will now construct two extensions $A_\mathfrak{M}$ of $A$, where $\mathfrak{M}$ has infinite dimension and where for any $0\neq\phi\in\mathfrak{M}$ we have that $A^*\phi\notin\mathcal{D}(A)$, but only one of them describes a closable extension of $A$, respectively a densely defined restriction of $A^*$. As in the previous section, for any $\lambda\in\R$, let us define the function $\phi_\lambda(x):=\frac{1}{2}e^{-|x-\lambda|}$.
Now, define
\begin{align*}
\mathfrak{M}_\Z&=\spann\{\phi_\lambda: \lambda\in \Z\}\\
\mathfrak{M}_\Q&=\spann\{\phi_\lambda: \lambda\in \Q\}
\end{align*}
i.e.\ the set of finite linear combinations of vectors $\phi_\lambda$, where $\lambda\in \Z, \Q$.
Recall  that $(A^*\phi_\lambda)(x)=-\frac{1}{2}\sgn(x-\lambda)e^{-|x-\lambda|}\notin 
\mathcal{D}(A)=H^1(\R)$ for any $\lambda\in\R$, implying that both $\mathfrak{M}_\Z$ and $\mathfrak{M}_\Q$ satisfy Condition \eqref{eq:precloscon}.
Now, let us show that $\mathfrak{M}_\R:=\spann\{\phi_\lambda: \lambda\in \R\}\subset\overline{\mathfrak{M}_\Q}^{\|\cdot\|_{\Gamma(A^*)}}$. To see this let $\lambda\in\R$ and just pick any sequence $\{\lambda_n\}_{n=1}^\infty\subset \Q$ such that $\lambda_n\rightarrow \lambda$ and consider
\begin{align*}
&\lim_{n\rightarrow\infty}\|\phi_{\lambda}-\phi_{\lambda_n}\|^2_{\Gamma(A^*)}\\=&\lim_{n\rightarrow\infty}\left(\left\|\frac{1}{2}e^{-|x-\lambda|}-\frac{1}{2}e^{-|x-\lambda_n|}\right\|^2+\left\|\frac{1}{2}\sgn(x-\lambda)e^{-|x-\lambda|}-\frac{1}{2}\sgn(x-\lambda_n)e^{-|x-\lambda_n|}\right\|^2\right)=0\:,
\end{align*}
which follows from dominated convergence. Next, let us determine $\mathfrak{M}_\R^{\perp_{\Gamma(A^*)}}$, i.e. we want to determine all $f\in\mathcal{D}(A^*)=H^1(\R)$ such that
\begin{equation} \label{eq:perpend}
\langle f,\phi\rangle+\langle f',\phi'\rangle=0\quad\text{for all}\:\: \phi\in\mathfrak{M}_\R\:,
\end{equation}
 and hence in particular for all $\phi_\lambda$, where $\lambda\in\R$. In \eqref{eq:pointcond}, we have already shown that Condition \eqref{eq:perpend} implies
$$0=\langle f,\phi_\lambda\rangle+\langle f',\phi_\lambda'\rangle=\overline{f(\lambda)}\quad\text{for all}\:\:\lambda\in\R\:,$$
meaning that $$\left(\overline{\mathfrak{M}_\Q}^{\|\cdot\|_{\Gamma(A^*)}}\right)^{\perp_{\Gamma(A^*)}}\subset\mathfrak{M}_\R^{\perp_{\Gamma(A^*)}}=\{0\}\:.$$ From this, we get that $\overline{\mathfrak{M}_\Q}^{\|\cdot\|_{\Gamma(A^*)}}=L^2(\R)$. Hence, $$\overline{\mathfrak{M}_\Q}^{\|\cdot\|_{\Gamma(A^*)}}\cap\mathcal{D}(A)=L^2(\R)\cap H^1(\R)=H^1(\R)\neq\{0\}\:,$$ which means that $\mathfrak{M}_\Q$ does not satisfy the assumptions of Theorem \ref{thm:denselydefined} and thus the operator $A_{\mathfrak{M}_\Q}$ is not closable (in fact, its adjoint is the zero operator on the trivial space $\{0\}$).\\
Let us now consider the case $\mathfrak{M}_\Z$. Again by using \eqref{eq:pointcond}, it is not difficult to see that $$\mathfrak{M}_\Z^{\perp_{\Gamma(A^*)}}=\{f\in H^1(\R): f(z)=0\quad\text{for all}\:\: z\in\Z\}\:.$$ 
In particular, we have $\mathcal{C}_c^\infty(\R\setminus\Z)\subset \mathfrak{M}_\Z^{\perp_{\Gamma(A^*)}}$ and thus, any function $g\in\overline{\mathfrak{M}_\Z}^{\|\cdot\|_{\Gamma(A^*)}}=\left(\mathfrak{M}_\Z^{\perp_{\Gamma(A^*)}}\right)^{\perp_{\Gamma(A^*)}}$ has to satisfy
\begin{equation} \label{eq:pointagain}
\langle f,g\rangle+\langle f',g'\rangle=0\:,
\end{equation}
for any $f\in\mathcal{C}_c^\infty(\R\setminus\Z)$.
Now, assume that the condition $\{A^*\phi:\phi\in\overline{\mathfrak{M}_\Z}^{\|\cdot\|_{\Gamma(A^*)}}\}\cap\mathcal{D}(A)=\{0\}$ is not satisfied, i.e. that there exists a $\widetilde{g}\in\mathcal{D}(A)$ such that $\widetilde{g}'\in\overline{\mathfrak{M}_\Z}^{\|\cdot\|_{\Gamma(A^*)}}\cap\mathcal{D}(A)$ (observe that $\ker A=\{0\}$). Then, we could perform another integration by parts in \eqref{eq:pointagain} and would obtain
$$\langle f,\widetilde{g}\rangle+\langle f',\widetilde{g}'\rangle=\langle f,\widetilde{g}-\widetilde{g}''\rangle=0\quad\text{for any}\:\: f\in\mathcal{C}_c^\infty(\R\setminus\Z)\:.$$ 
However, this implies that $\widetilde{g}-\widetilde{g}''=0$ since $f$ is an arbitrary element of the dense set $\mathcal{C}_c^\infty(\R\setminus\Z)$. Moreover, since there is no $L^2(\R)$-solution to the equation $\widetilde{g}=\widetilde{g}''$, we get that $\widetilde{g}=0$. Thus, by Lemma \ref{lemma:closable}, the operator $A_{\mathfrak{M}_\Z}$ is closable.

\section{Restrictions of selfadjoint operators} \label{sec:restrictions}

Given an unbounded selfadjoint reference operator $S=S^*$ on $\mathcal{H}$, let us construct densely defined and closed and thus in particular symmetric restrictions $C_\phi\subset S$ such that $\dim(\mathcal{D}(S)\setminus\mathcal{D}(C_\phi))=1$. We will restrict ourselves to the one-dimensional case for simplicity of presentation, however our results can be generalized straightforwardly.

For any $0\neq\phi\in\mathcal{D}(S)$ such that $S\phi\notin\mathcal{D}(S)$ let us define the operator $C_\phi$ as follows:
\begin{equation*}
C_\phi:\quad\mathcal{D}(C_\phi)+\{f\in\mathcal{D}(S):\langle \phi,f\rangle+\langle S\phi,Sf\rangle=0\},\qquad C_\phi=S\upharpoonright_{\mathcal{D}(C_\phi)}\:.
\end{equation*}
Noting that $S=S^*$ this means that $C_\phi=C_{\mathfrak{M}}(S)$, where $\mathfrak{M}=\spann\{\phi\}$ (cf. Definition \ref{def:restr}). By Theorem \ref{thm:denselydefined}, the condition $S\phi\notin\mathcal{D}(S)$ ensures that $C_\phi$ is densely defined. Moreover, since $C_\phi$ is a restriction of the selfadjoint operator $S$, this implies in particular that $C_\phi$ is symmetric and thus $C_\phi\subset C_\phi^*$. By Lemma \ref{lemma:adjeq}, $C_\phi^*$ is given by
\begin{align*}
C_\phi^*:\qquad\mathcal{D}(C_\phi^*)&=\mathcal{D}(S)\dot{+}\spann\{S\phi\}\\
f+\lambda S\phi&\mapsto Sf-\lambda \phi\:.
\end{align*}

In order to determine all selfadjoint extensions of $C_\phi$, let us firstly compute the defect spaces $\ker(C_\phi^*\mp i)$:
\begin{align*}
0&=(C_\phi^*\mp i)(f+\lambda S\phi)=(S\mp i)f+\lambda (-\phi\mp iS\phi)\Leftrightarrow
f=\lambda(S\mp i)^{-1}(\phi\pm iS\phi)\:,
\end{align*}
which implies that 
\begin{equation} \label{eq:defspace}
\ker(C_\phi^*\mp i)=\spann\{(S\mp i)^{-1}(\phi\pm iS\phi)+S\phi\}=\spann\{{(S\pm i)\phi}\}\:.
\end{equation}
By von Neumann's Theorem (e.g. {\cite[Satz 10.9]{Weid1}}), we know that all selfadjoint extensions of $C_\phi$ can be parametrized by unitary maps from $\ker(C_\phi^*-i)$ to $\ker(C_\phi^*+i)$.
Since $\|(S-i)\phi\|=\|(S+i)\phi\|$, they are given by
\begin{align} \label{eq:sadjext}
C_{\phi,\vartheta}:\qquad\qquad\mathcal{D}(C_{\phi,\vartheta})=\mathcal{D}(C_\phi)\dot{+}\spann\{(S+ i)\phi&+e^{i\vartheta}(S-i)\phi\}\notag\\
f+\lambda((S+i)\phi+e^{i\vartheta}(S-i)\phi)&\mapsto Sf+i\lambda((S+i)\phi-e^{i\vartheta}(S-i)\phi)\:,
\end{align}
where $\vartheta\in(-\pi,\pi]$. Note that, independently of the choice of $\phi$, we have $C_{\phi,\pi}=S$. This follows from the fact that $\phi\notin\mathcal{D}(C_\phi)$ but $\mathcal{D}(C_{\phi,\pi})=\mathcal{D}(C_\phi)\dot{+}\spann\{\phi\}\subset\mathcal{D}(S)$, from which we get equality by a dimension counting argument. 

Without loss of generality, assume that $\|(S+i)\phi\|=\|(S-i)\phi\|=1$ from now on. Let us now determine the resolvents $(C_{\phi,\vartheta}+ i)$ of the extensions $C_{\phi,\vartheta}$, which have to coincide on $\ran(C_{\phi}+i)=\spann\{(S+i)\phi\}^\perp$. Moreover, since we have
$$ (C_{\phi,\vartheta}+i)\left[(S+i)\phi+e^{i\vartheta}(S-i)\phi\right]=2i(S+i)\phi\in\ker(C_{\phi}^*-i)\:,$$ we get that
\begin{equation} \label{eq:resref}
(C_{\phi,\vartheta}+i)^{-1}(S+i)\phi=\frac{1}{2i}\left[(S+i)\phi+e^{i\vartheta}(S-i)\phi\right]\:.
\end{equation}
 Hence, since $(C_{\phi,\vartheta}+i)^{-1}\upharpoonright_{\ran(C_\phi+i)}=(S+i)^{-1}\upharpoonright_{\ran(C_\phi+i)}$ and by \eqref{eq:resref}, we get 
 $$\left[(C_{\phi,\vartheta}+i)^{-1}-(S+i)^{-1}\right](S+i)\phi=\frac{1+e^{i\vartheta}}{2i}(S-i)\phi\:,$$
 which implies that --- as an identity of operators --- we have
 \begin{equation} \label{eq:resolventdifference}
 (C_{\phi,\vartheta}+i)^{-1}=(S+i)^{-1}+\frac{1+e^{i\vartheta}}{2i}|(S-i)\phi\rangle\langle(S+i)\phi|\:.
 \end{equation}
Here, ``$|(S-i)\phi\rangle\langle(S+i)\phi|$" denotes the rank--one operator which maps $\psi\mapsto \langle(S+i)\phi,\psi\rangle (S-i)\phi$.

\begin{example}\normalfont \label{ex:laplacian}
Let $\Omega$ be an open, bounded domain in $\R^n$ (with $n\geq 2$) such that $\partial\Omega$ is smooth. Now consider the selfadjoint Dirichlet Laplacian $\Delta_D$ on $L^2(\Omega)$ given by
\begin{equation*}
\Delta_{D}\:\:\:\: :\quad\mathcal{D}(\Delta_{D})=\left\{f\in H^2(\Omega): f\upharpoonright_{\partial\Omega}=0\right\},\quad f\mapsto \Delta f \:.
\end{equation*}
Let $g\in L^2(\Omega)$, $h\in\mathcal{C}(\partial\Omega)$ and consider the following restriction of $\Delta_D$:
\begin{align} \label{eq:laprec}
\widetilde{\Delta}_{D}:\qquad\mathcal{D}(\widetilde{\Delta}_{D})=\left\{f\in\mathcal{D}(\Delta_D): \int_\Omega \overline{g}\cdot f\,dx=\int_{\partial\Omega} \overline{h}\cdot \partial_\nu f\,d\sigma\right\},\quad f\mapsto\Delta f\:
\end{align} 
where $\partial_\nu f$ denotes the normal derivative of $f$.
Now, let $$\phi(g,h):=(\Delta_D+i)^{-1}(\Delta_D-i)^{-1}g-\Delta_D(\Delta_D+i)^{-1}(\Delta_D-i)^{-1}\eta_h\:,$$
where $\eta_h\in\mathcal{C}(\overline{\Omega})$ is the unique function harmonic on $\Omega$ with $\eta_h\upharpoonright_{\partial\Omega}=h$. Note that $\phi(g,h)\in\mathcal{D}(\Delta_D)$. A calculation -- using Green's identity -- then shows that for any $f\in\mathcal{D}(\Delta_D)$, we have

\begin{equation*}
\langle \phi(g,h),f\rangle+\langle \Delta_D\phi(g,h),\Delta_Df\rangle=0\:\Leftrightarrow    \int_\Omega \overline{g}\cdot f\,dx=\int_{\partial\Omega} \overline{h}\cdot \partial_\nu f\,d\sigma\:,
\end{equation*}
which implies that 
\begin{equation*}
\mathcal{D}(\widetilde{\Delta}_D)=\{f\in\mathcal{D}(\Delta_D):\langle \phi(g,h),f\rangle+\langle \Delta_D\phi(g,h),\Delta_Df\rangle=0\}\:.
\end{equation*}
In other words, $\widetilde{\Delta}_D=C_{\mathfrak{M}}(\Delta_D)$, with $\mathfrak{M}=\spann\{\phi(g,h)\}$. By Theorem \ref{thm:denselydefined}, $\mathcal{D}(\widetilde{\Delta}_D)$ is dense in $L^2(\Omega)$ if and only if $\Delta_D\phi(g,h)\notin\mathcal{D}(\Delta_D)$. Since
\begin{equation} \label{eq:domadd}
\Delta_D\phi(g,h)=\underbrace{\Delta_D(\Delta_D+i)^{-1}(\Delta_D-i)^{-1}g+(\Delta_D+i)^{-1}(\Delta_D-i)^{-1}\eta_h}_{\in\mathcal{D}(\Delta_D)}-\eta_h\:,
\end{equation}
this is the case if and only if $h\neq 0 \:\Leftrightarrow\: \eta_h\neq 0$, which we will assume from now on. Note that this shows denseness of $\mathcal{D}(\widetilde{\Delta}_D)$ in $L^2(\Omega)$ just by making operator-algebraic manipulations when applying $\Delta_D$ to $\phi(g,h)$. For later convenience, we may assume without loss of generality that
$\int_{\partial\Omega}|h|^2d\sigma=1$ at this point. Let us now also introduce the minimal and maximal realizations of the Laplacian on $\Omega$, $\Delta_{min}$ and $\Delta_{max}$ respectively:

\begin{align*}
\Delta_{min}&:\quad\mathcal{D}(\Delta_{min})=\left\{f\in H^2(\Omega): f\upharpoonright_{\partial\Omega}={\partial_\nu f}\upharpoonright_{\partial\Omega}=0\right\},\quad f\mapsto \Delta f \:,\\
\Delta_{max}&:\quad\mathcal{D}(\Delta_{max})=\{f\in L^2(\Omega): \Delta f\in L^2(\Omega)\},\qquad\qquad\:\: f\mapsto \Delta f \:.\\
\end{align*}
It is well-known that $\Delta_{min}^*=\Delta_{max}$ and that $\Delta_D$ is the Friedrichs extension of $\Delta_{min}$ from which by \cite[Lemma 2.5]{Alonso-Simon} we get
\begin{equation} \label{eq:adjdomain}
\mathcal{D}(\Delta_{max})=\mathcal{D}(\Delta_D)\dot{+}\ker(\Delta_{max})\:,
\end{equation}
where $\ker(\Delta_{max})$ turns out to be the space of functions in $L^2(\Omega)$ which are harmonic on $\Omega$. In particular, since for any $h\in\mathcal{C}(\partial\Omega)$, we have $\eta_h\in\ker(\Delta_{max})$,  we get from \eqref{eq:domadd} and \eqref{eq:adjdomain} that $\Delta_D\phi(g,h)\in\mathcal{D}(\Delta_{max})$.
Moreover, from \eqref{eq:laprec}, we see that $\Delta_{min}\subset\widetilde{\Delta}_D$ if and only if $g\equiv 0$. On the other hand, by Lemma \ref{lemma:adjeq}
\begin{align*}
\mathcal{D}(\widetilde{\Delta}_D^*)&=\mathcal{D}(\Delta_D)\dot{+}\spann\{\Delta_D\phi(g,h)\}=\mathcal{D}(\Delta_D)\dot{+}\spann\{\eta_h\}\subset\mathcal{D}(\Delta_{max})\:,
\end{align*}
i.e.\ ``boundary conditions" as in \eqref{eq:laprec} describe restrictions $\widetilde{\Delta}_D$ of $\Delta_D$ whose adjoint $\widetilde{\Delta}_D^*$ has domain contained in $\mathcal{D}(\Delta_{max})$. Another calculation  -- using \eqref{eq:domadd} -- now shows that $\widetilde{\Delta}_D^*\eta_h=g$, from which we get
\begin{align*}
\widetilde{\Delta}_D^*:\qquad \mathcal{D}(\widetilde{\Delta}_D^*)=\mathcal{D}(\Delta_D)\dot{+}\spann\{\eta_h\},\qquad (f_D+\lambda\eta_h)\mapsto \Delta f_D+\lambda g
\end{align*}
for any $f_D \in\mathcal{D}(\Delta_D)$ and any $\lambda\in\C$. Using that for any $f\in\mathcal{D}(\widetilde{\Delta}_D^*)$, we have that $f\upharpoonright_{\partial\Omega}\in\spann\{h\}$, it is not hard to see that $\widetilde{\Delta}_D^*$ can also be characterized as follows:
\begin{align*}
\widetilde{\Delta}_D^*:\quad\mathcal{D}(\widetilde{\Delta}_D^*)=\left\{f\in H^2(\Omega): f\upharpoonright_{\partial\Omega}=h\int_{\partial\Omega}\overline{h}fd\sigma\right\},\quad f\mapsto \Delta f+g\int_{\partial\Omega}\overline{h}fd\sigma\:.
\end{align*}
Let $n_\pm$ now denote the defect elements of $\widetilde{\Delta}_D$, i.e. 
$\ker(\widetilde{\Delta}_D^*\mp i)=\spann\{n_\pm\}$.
By \eqref{eq:defspace}, we get 
\begin{equation} \label{eq:defel}
n_\pm=(\Delta_D\pm i)\phi=(\Delta_D\mp i)^{-1}g-\Delta_D(\Delta_D\mp i)^{-1}\eta_h=(\Delta_D\mp i)^{-1}(g\mp i\eta_h)-\eta_h    
\end{equation}
and therefore all selfadjoint extensions of $\widetilde{\Delta}_D$ are given by
\begin{align*}
\widetilde{\Delta}_{D,\vartheta}:\quad\mathcal{D}(\widetilde{\Delta}_{D,\vartheta})=\mathcal{D}(\widetilde{\Delta}_{D})\dot{+}\spann\{n_++e^{i\vartheta}n_-\},\qquad \widetilde{\Delta}_{D,\vartheta}=\widetilde{\Delta}_D^*\upharpoonright_{\mathcal{D}(\widetilde{\Delta}_{D,\vartheta})}\:.
\end{align*}
As argued above, the extension $\widetilde{\Delta}_{D,\pi}$ is equal to the selfadjoint Dirichlet Laplacian $\Delta_D$. Hence, let us only consider $\vartheta\in(-\pi,\pi)$ from now on.
In order to describe all the other selfadjoint extensions of $\widetilde{\Delta}_D$ in terms of boundary conditions, firstly observe that
\begin{align} \label{eq:defidty}
C:=\:\:\:\:\:&\int_{\partial\Omega}\overline{h}\partial_\nu n_+d\sigma-\langle g,n_+\rangle=-\int_{\partial\Omega}\overline{n_-}\partial_\nu n_+d\sigma-\langle g,n_+\rangle\notag\\=-&\int_{\partial\Omega}\overline{\partial_\nu n_-}\cdot n_+d\sigma-\int_\Omega\left(\overline{n_-}\Delta n_+-n_+\overline{\Delta n_-}\right)dx-\langle g,n_+\rangle\notag\\=-&\int_{\partial\Omega}\overline{\partial_\nu n_-} \cdot n_+d\sigma-\int_\Omega\left(\overline{n_-}(in_++g)-n_+\overline{(-in_-+g)}\right)dx-\langle g,n_+\rangle\notag\\=\:\:\:\:\:&\int_{\partial\Omega}{h}\overline{\partial_\nu n_-}d\sigma-\langle n_-,g\rangle\:,
\end{align}
where we have used that by \eqref{eq:defel}, $n_\pm\upharpoonright_{\partial\Omega}=-h$ and $\Delta n_\pm=\pm i n_\pm+g$. On the other hand, by virtue of the same identities, we get
\begin{align} \label{eq:ImC}
C&=\int_{\partial\Omega}\overline{h}\partial_\nu n_+d\sigma-\langle g,n_+\rangle=-\int_{\partial\Omega}\overline{n_+}\partial_\nu n_+d\sigma-i\|n_+\|^2-\langle\Delta n_+,n_+\rangle\notag\\&=\|\nabla n_+\|^2-\int_{\partial\Omega}\left(\overline{n_+}\partial_\nu n_++n_+\overline{\partial_\nu n_+}\right)d\sigma                                                                                                                                                                                                                                                                                                                                                                                                                                                                                                                                        -i\|n_+\|^2\:,
\end{align}
which shows that $\Imag(C)=-\|n_+\|^2\neq 0$. Now, let $f=\widetilde{f}_D+\lambda(n_++e^{i\vartheta}n_-)$ be an arbitrary element of $\mathcal{D}(\widetilde{\Delta}_{D,\vartheta})$, where $\widetilde{f}_D\in\mathcal{D}(\widetilde{\Delta}_D)$ and $\lambda\in\C$. We then get
\begin{equation} \label{eq:boundeval}
\int_{\partial\Omega}\overline{h}fd\sigma=\lambda\int_{\partial\Omega}\overline{h}(n_++e^{i\vartheta}n_-)d\sigma=-\lambda(1+e^{i\vartheta})\int_{\partial\Omega}|h|^2d\sigma=-\lambda(1+e^{i\vartheta})
\end{equation}
as well as 
\begin{align*} \label{eq:boundeval2}
\int_{\partial\Omega}\overline{h}\partial_\nu fd\sigma-\langle g,f\rangle&=\lambda\left[\int_{\partial\Omega}\overline{h}\partial_\nu n_+d\sigma-\langle g,n_+\rangle+e^{i\vartheta}\left(\int_{\partial\Omega}\overline{h}\partial_\nu n_-d\sigma-\langle g,n_-\rangle\right)\right]\notag\\&\overset{\eqref{eq:defidty}}{=}\lambda(C+e^{i\vartheta}\overline{C})\overset{\eqref{eq:boundeval}}{=}-\frac{C+e^{i\vartheta}\overline{C}}{1+e^{i\vartheta}}\int_{\partial\Omega}\overline{h}fd\sigma=(-\Real(C)-\tan(\vartheta/2)\cdot\Imag(C))\int_{\partial\Omega}\overline{h}fd\sigma\notag\\&=r(\vartheta)\cdot\int_{\partial\Omega}\overline{h}fd\sigma\:.
\end{align*}
Now, since by \eqref{eq:ImC}, $\Imag(C)\neq 0$, the map $r:\vartheta\mapsto (-\Real(C)-\tan(\vartheta/2)\cdot\Imag(C))$ as a bijection from $(-\pi,\pi)$ to $\R$. From this, it is easy to see that all selfadjoint extensions of $\widetilde{\Delta}_D$ can also be described as
\begin{equation*}
\widetilde{\Delta}_{D,r}:\quad\mathcal{D}(\widetilde{\Delta}_{D,r})=\left\{f\in\mathcal{D}(\widetilde{\Delta}_D^*): \int_{\partial\Omega} \overline{h}\partial_\nu fd\sigma-\langle g,f\rangle=r\int_{\partial\Omega}\overline{h}fd\sigma \right\},\quad \widetilde{\Delta}_{D,r}=\widetilde{\Delta}_D^*\upharpoonright_{\mathcal{D}(\widetilde{\Delta}_{D,r})}\;,
\end{equation*}
where $r\in\R\cup\{\infty\}$ with the convention that $\widetilde{\Delta}_{D,\infty}$ corresponds to the Dirichlet Laplacian $\Delta_D$.
\end{example}
\subsection*{Acknowledgements} I am very grateful to Sergii Kuzhel for valuable feedback on this manuscript and for providing useful references.  Parts of this work have been done during my PhD studies at the University of Kent in Canterbury, UK (cf.\ \cite[Chapter 4]{thesis}), and I would also like to thank my thesis advisors Sergey Naboko and Ian Wood for support and guidance. Finally, I would like to express my appreciation to the UK Engineering and Physical Sciences Research Council (Doctoral Training Grant Ref.\ EP/K50306X/1) and the School of Mathematics, Statistics and Actuarial Science at the University of Kent for a PhD studentship.
\bigskip


\begin{thebibliography}{A}
\bibitem{AKN2007} S.\ Albeverio, S.\ Kuzhel and L.\ Nizhnik: Singularly perturbed self-adjoint operators in scales of Hilbert spaces (in Russian), Ukra\"in. Mat.\ Zh.\ {\bf{59}} (2007), no.\ 6, 723-743; English transl.\ in Ukrainian Math.\ J.\ {\bf {59}} (2007), 787-810. 
\bibitem{AKN2008} S.\ Albeverio, S.\ Kuzhel and L.\ Nizhnik: On the Perturbation Theory of Self-Adjoint Operators, Tokyo J.\ Math.\ {\bf{31}}, no.\ 2 (2008), 273-292. 
\bibitem{Alonso-Simon} A.\ Alonso and B.\ Simon: The Birman-Kre\u\i n-Vishik theory of selfadjoint extensions of semibounded operators, J.\ Operator Theory {\bf{4}} (1980), 251-270.
\bibitem{Arli} {Yu.\ Arlinski\u\i}: Boundary triplets and maximal accretive extensions of sectorial operators, Operator Methods for Boundary Value Problems. Ed. Seppo Hassi, Hendrik S. V. de Snoo, and Franciszek Hugon Szafraniec. 1st ed. Cambridge: Cambridge University Press, (2012), 35-72.
\bibitem{AT2003} Yu.\ Arlinski\u{\i} and E.\ Tsekanovski\u{\i}: Some remarks on singular perturbations of self-adjoint operators, Methods Funct.\ Anal.\ Topology {\bf{9}} (2003), 287-308.
\bibitem{AT2009} {Yu.\ Arlinski\u\i} and E.\ Tsekanovski\u\i : M.\ Kre\u\i n's Research on Semi-Bounded Operators, its Contemporary Developments and Applications, Oper.\ Theory Adv.\ and Appl., {\bf{190}} (2009), 65-112.
\bibitem{CrandallPhillips} M.\ Crandall and R.\ Phillips: On the extension problem for dissipative operators, Journ. Func. Anal. {\bf{2}} (1968), 147-176.
\bibitem{thesis} C.\ Fischbacher: On the Theory of Dissipative Extensions, PhD Thesis, University of Kent, 2017. Available at \url{https://kar.kent.ac.uk/61093/}
\bibitem{Grubb68} G.\ Grubb: A characterization of the non-local boundary value problems associated with an elliptic operator, Annali della Scuola Normale Superiore di Pisa, Classe di Scienze $3^e$ s\'{e}rie, {\bf{22}} (1968), 425-513.
\bibitem{HK2009} S.\ Hassi and S.\ Kuzhel: On symmetries in the theory of finite rank singular perturbations, J.\ of Funct.\ Anal.\ {\bf{256}} (2009), 777-809.
\bibitem{Kato} T.\ Kato: Perturbation Theory for Linear Operators, Springer-Verlag, New York, 1966.
\bibitem{KS95} A.\ Kiselev and B.\ Simon: Rank-one perturbations at infinitesimal coupling, J.\ Funct.\ Anal., {\bf{128}}  (1995), 245-252.
\bibitem{KuraKuro2004} P.\ Kurasov  and S.\ T.\ Kuroda: Krein's resolvent formula and perturbation theory, J.\ Oper.\ Theory {\bf{51}}, No.\ 2 (2004), 321-334. 
\bibitem{KuroNaga2001} S.\ T.\ Kuroda and H.\ Nagatani: resolvent formulas of general type and its application to point interactions, J.\ Evol.\ Equ.\ {\bf{1}} (2001), 421-440. 
\bibitem{KuzhelZnojil2017} S.\ Kuzhel and M.\ Znojil: Non-self-adjoint Schr\"odinger operators with nonlocal one-point interactions, Banach J.\ Math.\ Anal.\ {\bf{11}}, No.\ 4 (2017), 923-944.   
\bibitem{LiebLoss} E.\ Lieb and M.\ Loss: Analysis, Second Edition, Graduate Studies in Mathematics Volume 14, American Mathematical Society, Providence RI, 2001.
\bibitem{Posilicano} A.\ Posilicano: A Krein-like formula for singular perturbations of self-adjoint operators and applications, J.\ Funct.\ Anal.\ , {\bf{183}} (2001), 109-147. 
\bibitem{SS2009} Z.\ Sebesty\' en and Jan Stochel: On suboperators with codimension one domains, J.\ Math.\ Anal.\ Appl.\ {\bf{360}} (2009), 391-397.
\bibitem{Weid1} J.\ Weidmann: Lineare Operatoren in Hilbertr\"aumen, Teil I Grundlagen, Verlag B.G. Teubner, Stuttgart/Leipzig/Wiesbaden, 2000 (German).


\end{thebibliography}
\end{document}